\title{Physics-informed approach for exploratory Hamilton--Jacobi--Bellman equations via policy iterations}
\author{
  Yeongjong Kim{\rm 1}\equalcontrib, Namkyeong Cho{\rm 2}\equalcontrib, Minseok Kim{\rm 3}, Yeoneung Kim{\rm 3}\thanks{Corresponding author, yeoneung@seoultech.ac.kr}
}
\newtheorem{theorem}{Theorem}
\newtheorem{assumption}{Assumption}
\newtheorem{lemma}{Lemma}
\newtheorem{proposition}{Proposition}
\newcommand{\R}{\mathbb{R}}
\newcommand{\de}{\,\mathrm{d}}
\begin{document}

\maketitle

\begin{abstract}
We propose a mesh-free policy iteration framework based on physics-informed neural networks (PINNs) for solving entropy-regularized stochastic control problems. The method iteratively alternates between soft policy evaluation and improvement using automatic differentiation and neural approximation, without relying on spatial discretization. We present a detailed $L^2$ error analysis that decomposes the total approximation error into three sources: iteration error, policy network error, and PDE residual error. The proposed algorithm is validated with a range of challenging control tasks, including high-dimensional linear-quadratic regulation in 5D and 10D, as well as nonlinear systems such as pendulum and cartpole problems. Numerical results confirm the scalability, accuracy, and robustness of our approach across both linear and nonlinear benchmarks.
\end{abstract}

--------------------
\section{Introduction}
Solving nonlinear Hamilton--Jacobi--Bellman (HJB) equations lies at the core of stochastic optimal control. In continuous-time settings, control strategies must handle uncertainty while ensuring long-term optimality. Among various formulations, entropy-regularized control which is also known as exploratory or soft control has emerged as a powerful paradigm that augments the running cost with a Kullback--Leibler divergence penalty against a reference measure. This regularization not only induces stochasticity in control policies but also promotes robust exploration and analytical tractability \cite{wang2020reinforcement,ziebart2008maximum}. While entropy-regularized control in continuous time~\cite{wang2020reinforcement} is rigorously formulated, the work does not address the challenges of function approximation or high-dimensional computation. Our framework complements this theory by incorporating neural approximation and residual-driven learning in a mesh-free manner.

The resulting HJB equation is a nonlinear second-order elliptic partial differential equation (PDE), typically defined over an unbounded domain. Solving such equations numerically remains a central challenge, especially in high dimensions. Soft policy iteration (PI)~\cite{tran2025policy,tang2022exploratory} alternates between policy improvement via a soft-max update and policy evaluation via solving a linear elliptic PDE. Their analysis rigorously establishes exponential convergence under two canonical regimes: bounded coefficients with small control in the diffusion, and unbounded dynamics under sufficient discounting. Their results extend classical Howard-type PI into the entropy-regularized setting with relaxed controls and randomized policies.

While the theory of soft PI is well understood, practical deployment in high-dimensional control remains difficult because each iteration requires solving a nonlinear PDE. Recent work has attempted to bypass grids with mesh-free physics-informed neural networks (PINNs), embedding the PDE residual into a variational loss \cite{lee2025hamilton,ramesh2023physics}. Yet rigorous {\it a priori} guarantees in the entropy-regularized setting are still missing. In the deterministic case, PINN-based policy iteration methods~\cite{meng2024physics,lee2025hamilton} establish only $L^\infty$ convergence on compact domains and do not incorporate entropy regularization or soft-max policies. Separately, a physics-informed model-based reinforcement learning framework~\cite{ramesh2023physics} constrains the learned dynamics model but does not formulate or analyze the underlying HJB. In contrast, our method targets the entropy-regularized HJB directly and derives an $L^{2}$ error analysis for a fully mesh-free PI scheme.

In this work, we propose a fully mesh-free implementation of the soft PI framework using PINNs. At each iteration, the value function is approximated by a neural network trained to minimize the residual of a linearized elliptic PDE. At the same time, the policy is updated analytically via the softmax formula. Crucially, this structure allows us to derive a quantitative $L^2$ energy estimate that explicitly tracks how approximation errors propagate across iterations. We decompose the total error into three interpretable sources, policy update error, PDE residual error, and policy iteration error, and provide bounds for each component. Unlike prior work, our formulation exploits the linear structure of each policy evaluation step to rigorously control approximation quality, while remaining scalable to high-dimensional domains through mesh-free optimization. This establishes a principled and practically viable foundation for solving entropy-regularized control problems in modern learning-based settings.

To summarize, our contributions are threefold:
\begin{itemize}
\item We propose a fully mesh-free implementation of the entropy-regularized policy iteration method via PINNs.
\item We derive an $L^2$-based energy estimate for the value function error under approximation of the policy distribution.
\item We present numerical experiments that demonstrate applicability to high-dimensional problems.
\end{itemize}

These results illustrate the potential of combining rigorous policy iteration theory with modern operator-learning techniques to enable reliable and scalable solvers for stochastic control problems.

\section{Problem setup}
\subsection{Exploratory HJB equations}
Let $U \subset \mathbb{R}^m$ denote the compact control space. We consider a filtered probability space $(\Omega, \mathcal{F}, \{\mathcal{F}_t\}_{t\ge0}, \mathbb{P})$ equipped with a $d$-dimensional Brownian motion $(W_t)_{t \ge 0}$.
For each relaxed control
$\pi=\{\pi(x,\cdot)\}_{x\in\mathbb{R}^d}$, where $\pi(x,\cdot)\in \mathbb{P}(U)$ is a probability measure on $U$ for each $x$,\footnote{We consider relaxed controls $\pi(x,\cdot)\in\mathbb{P}(U)$ that admit 
densities with respect to the Lebesgue measure on $U$, 
so that $\pi(x,\de u)=\pi(x,u) \de u$ and all integrals 
$\int_U (\cdot)\pi(x,u)\,du$ are well-defined.}
the state evolves according to the following SDE
\begin{equation}\label{eq:SDE-relaxed}
\begin{cases}
\de X_t^\pi  &=  \left[\int_U b(X^\pi_t,u) \pi(X_t, u)\,\mathrm{d}u \right]\de t +   \sigma(X^\pi_t) \de W_t,\\
X_0& = x \in \R^{d}. 
\end{cases}
\end{equation}
This corresponds to the situation where, at each state $X_t = x \in \R^{d}$, the control $u\in U$ is chosen randomly from $\pi(X_t, u)$, and the drift coefficient is averaged accordingly.

Let $\Pi:=\{\pi:\mathbb{R}^d\to\mathbb{P}(U)\}$.
Given policy $\pi \in \Pi$, define the value function as
\begin{equation}\label{eq:value_function_pi}\begin{aligned}
&V^\pi(x)
:= \mathbb{E}_x \left[ \int_0^\infty e^{-\rho t} \right.\\
&\quad \ \ \  \times  \left(\left.\int_U \left(r(X^\pi_t,u)-\lambda\ln\pi(X_t,u)\right)\pi(X_t,u) \de u\right) \de t\right].
\end{aligned}
\end{equation}
The optimal value function is defined as 
\[
V(x) := \sup_{\pi\in\Pi} V^{\pi}(x).
\]
Under suitable conditions, the value function solves the following nonlinear second-order elliptic PDE:
\begin{equation}\label{eq:HJB}
\begin{aligned}
&\rho V(x)=\sup_{\pi\in\Pi }F_{\pi}(x, \nabla_x V(x), D_{xx}^2 V(x))
% \rho V(x) = \sup_{\pi\in\mathcal P(U)} \int_U \left[
% b(x,u)\cdotD V(x)
% + \tfrac12 \operatorname{tr}\left(\Sigma(x,u) D^2 V(x)\right)
% + r(x,u) - \lambda\log \pi(x,u)
% \right] \pi(du),
\end{aligned}
\end{equation}
Here, for simplicity, we denote 
\begin{align*}
\Sigma(x) &:= \sigma(x)\sigma(x)^{\top}, \nonumber \\
f(x,u,p) &:=b(x,u)\cdot p +r(x,u),
\end{align*}
and
\[
\begin{split}
&F_\pi(x,p, X)\\
&:= \int_{U} (f(x,u,p)  -\lambda \ln(\pi(u) ) \pi(u) \de u+\tfrac{1}{2}\operatorname{tr}(\Sigma(x)X),
\end{split}
\]
for $x, p\in \R^d, u\in U$ and $X\in \R^{d\times d}$.

\paragraph{Notations} Let us begin with some notations used throughout the paper. For \( x \in \mathbb{R}^d \), we write \( |x| \) for the Euclidean norm. We denote the Hessian of a scalar function $f$ by $D_{xx}^2 f$. For $p \in [1,\infty)$, we write $f\in L^p(\Omega)$, if 
\[
\|f\|_{L^p(\Omega)} := \left( \int_\Omega |f(x)|^p \, \,\mathrm{d}x \right)^{1/p}.
\]
We write $C_b^k(\R^d)$ for the space of $k$-times continuously differentiable functions $f:\R^d \to \R$ such that all partial derivatives up to order $k$ are bounded. The norm is defined by
\[
\|f\|_{C_b^k} := \sum_{|\alpha| \le k} \|D^\alpha f\|_{L^\infty(\R^d)}.
\]
In particular, $f \in C_b^2(\R^d)$ implies that $f$, its gradient $\nabla_x f$, and its Hessian $D_{xx}^2 f$ are all bounded.

To ensure well-posedness of the control problem and regularity of the value function, we impose the following structural assumptions on the system dynamics and cost functions.
\begin{assumption}\label{ass:main}
We assume the following:
\begin{enumerate}
\item [\textbf{(A1)}] The matrix $\Sigma = \sigma\sigma^\top$ satisfies the uniform ellipticity condition $\Sigma(x) \succeq \frac{1}{C_0}I_d$.

\item [\textbf{(A2)}] The functions $b(\cdot,u)$, $r(\cdot,u)$, and $\sigma(\cdot)$ belong to $C_b^{2}(\R^d)$ uniformly in $u$ and 
\[
\|b(\cdot,u)\|_{C^2(\R^d)}+ \|r(\cdot,u)\|_{C^2(\R^d)}+\|\sigma(\cdot)\|_{C^2(\R^d)} <C_1
\]

\item[\textbf{(A3)}] \textbf{(Entropy regularization)}  
The constant $\lambda > 0$ is fixed and governs the strength of the entropy term in %\eqref{eq:cost}
\eqref{eq:value_function_pi}  and \eqref{eq:HJB}.

\end{enumerate}

\begin{comment}
\item[\textbf{(A3)}] \textbf{(Discount factor)}  
The discount rate $\rho > 0$ is assumed to be sufficiently large to ensure stability and uniqueness of the value function. Specifically, in the unbounded-coefficient case, it must satisfy the lower bound given in \cite[Eq.~(2.11)]{tran2025policy}:
\begin{equation}\label{eq:rho_lower_bound}
\rho \ge 4(N+1)(A_1 + N A_2)
\end{equation}  for constants $A_1, A_2,N$  from growth bounds.
\end{comment}
\end{assumption}

\subsection{Policy iteration scheme}
To solve the entropy-regularized HJB equation~\eqref{eq:HJB}, we adopt a classical policy iteration (PI) strategy extended to the randomized control setting. The key idea is to alternate between evaluating the value function under a fixed policy and improving the policy by optimizing a soft Bellman operator that reflects entropy-regularized performance.

This framework is particularly attractive because, although the original HJB equation is defined over an unbounded domain, each policy evaluation step reduces to solving a linear second-order elliptic PDE under a fixed control distribution. The policy improvement step, in turn, amounts to a pointwise softmax update that admits a closed-form expression.

In Algorithm~\ref{alg:soft-pi}, we present the exact policy iteration scheme proposed in the soft PI framework~\cite{tang2022exploratory,tran2025policy}, which we later extend to a mesh-free, PINN-based implementation. The algorithm alternates between two steps: a policy improvement step, where a soft-optimal policy is computed via the Boltzmann distribution, and a policy evaluation step, which involves solving a linear elliptic PDE under the fixed policy. The key advantage of this formulation is that the original HJB equation is decomposed into a sequence of tractable subproblems, specifically, linear PDEs with frozen coefficients and closed-form policy updates. This structure not only simplifies analysis but also lends itself naturally to function approximation methods. 

We initialize the algorithm with a value function \( v^0 \in C_b^{2}(\mathbb{R}^d) \), which may be chosen as a zero function or a smooth prior based on approximate linearization (e.g., a quadratic ansatz from LQR theory). While the initial guess affects the transient phase, the policy iteration scheme~\cite{tran2025policy,tang2022exploratory} is contractive and converges exponentially to the optimal value regardless of $v^0$.

\begin{algorithm}[H]
\caption{Soft Policy Iteration}
\label{alg:soft-pi}
\begin{algorithmic}[1]
\STATE \textbf{Initialize:} Value function $v^0 : \mathbb{R}^d \to \mathbb{R}$
\FOR{$n = 1, 2, \dots$}
    \STATE \textbf{(Policy improvement step):} Given $v^{n-1}$, define new policy $\pi^n$:

    \begin{equation}
    \label{eq:policy}
    \pi^n(x,u) 
    := 
    \frac{ \exp\left[ \tfrac{1}{\lambda} f(x,u, \nabla_x v^{n-1}(x))\right] }
    { \int_U \exp\left[ \tfrac{1}{\lambda} f(x,u', \nabla_x v^{n-1}(x))\right]   \,\mathrm{d}u' }
    \end{equation}
    \STATE \textbf{(Policy evaluation step):} Solve for $v^n$ the PDE:
    \begin{equation}
    \label{eq:evaluation}
    \rho  v^n(x)
    = F_{\pi^n}(x, \nabla_x v^{n}(x), D^2_{xx} v^n(x))
    \end{equation}
\ENDFOR
\end{algorithmic}
\end{algorithm}

For completeness, we collect the structural assumptions and key convergence results~\cite{tran2025policy,ma2024convergence}, as our $L^2$-analysis in Section~\ref{sec:error} builds on these facts.

To enable rigorous energy estimates in unbounded domains, we begin by imposing a structural decay assumption on the system coefficients. This ensures that beyond a large ball, the dynamics are effectively inactive, allowing us to restrict attention to a bounded computational domain.

\begin{assumption}\label{ass:decay}
We impose the following assumption.
\item $B := \sup_{u \in U}\|\nabla_x \cdot  b(\cdot,u)\|_{L^\infty(\mathbb{R}^d)}<\infty$, and there exists $R>0$ such that $b(x,u)=0$ uniformly in $u \in U$ for $|x|\geq R$.
\end{assumption}

The following classical $L^2$ energy estimate provides coercivity for linear elliptic equations with drift and diffusion. This lemma will be repeatedly used in our error analysis, particularly when quantifying the propagation of residuals across policy evaluation steps.

\begin{lemma}[Energy estimate for linear elliptic PDE with drift and diffusion]
\label{lem:energy_estimate}
Given $\tilde r, \tilde b, \tilde \sigma \in C_b^{2}(\R^d)$, assume that (i) $\rho > \frac{1}{2}B$, where $B := \|\nabla_x \cdot \tilde b(\cdot,u)\|_{L^\infty(\mathbb{R}^d)}$, (ii) $\tilde\Sigma(x)\succeq \frac{1}{C_0}I_d$, (iii) $\tilde r \in L^2(\mathbb{R}^d)$. Let $v$ be a unique classical solution to
\[
\rho v(x) - \tilde r(x) - \tilde b(x)\cdot \nabla_x v(x)
- \tfrac12 \mathrm{tr}(\tilde\Sigma(x) D_{xx}^2 v(x)) = 0.
\]
Then, $v \in C^2_b(\R^d)$ and we have the $L^2$ energy bound:
\[
\left(\rho - \tfrac12 B\right) \|v\|_{L^2(\R^d)}^2
+ \tfrac{1}{2C_0} \|\nabla_x v\|_{L^2(\R^d)}^2
\le \|\tilde r\|_{L^2(\R^d)} \cdot \|v\|_{L^2(\R^d)}.
\]
Therefore,
\begin{align*}
\|v\|_{L^2(\mathbb{R}^d)} &\le \frac{1}{\rho - \frac{1}{2} B} \|\tilde r\|_{L^2(\mathbb{R}^d)}, \\
\|\nabla_x v\|_{L^2(\mathbb{R}^d)} &\le \sqrt{\frac{C_0}{\rho - \frac{1}{2} B} }\|\tilde r\|_{L^2(\mathbb{R}^d)}.
\end{align*}
\end{lemma}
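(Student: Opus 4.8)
The plan is to run the standard energy argument: test the equation against the solution itself, integrate over $\R^d$, integrate by parts, and invoke uniform ellipticity to produce the coercive terms on the left. Before that, I would dispatch the regularity claim $v\in C_b^2(\R^d)$: since $\tilde b,\tilde\sigma,\tilde r\in C_b^2$ are bounded (hence $\tilde r\in L^\infty$) and $\tilde\Sigma\succeq \tfrac1{C_0}I_d$, the maximum principle gives $\|v\|_{L^\infty}\le \rho^{-1}\|\tilde r\|_{L^\infty}$, and interior Schauder estimates applied on unit balls centered at arbitrary points, with data bounded uniformly in the center, upgrade this to a uniform bound on $\|v\|_{C_b^{2,\alpha}}$; in particular $v,\nabla_x v,D^2_{xx}v\in L^\infty(\R^d)$. (Uniqueness is then either already assumed or follows a posteriori from the energy bound applied to the difference of two solutions, which has zero source.)

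For the core estimate I would multiply the PDE by $v$ and integrate, obtaining
\[
\rho\|v\|_{L^2}^2 - \int_{\R^d}\tilde r\, v\,\mathrm{d}x - \int_{\R^d}(\tilde b\cdot\nabla_x v)\,v\,\mathrm{d}x - \tfrac12\int_{\R^d}\operatorname{tr}(\tilde\Sigma D^2_{xx}v)\,v\,\mathrm{d}x = 0.
\]
For the drift term, use $(\tilde b\cdot\nabla_x v)v=\tfrac12\,\tilde b\cdot\nabla_x(v^2)$ and integrate by parts to get $\int(\tilde b\cdot\nabla_x v)v=-\tfrac12\int(\nabla_x\cdot\tilde b)v^2$, whose absolute value is at most $\tfrac12 B\|v\|_{L^2}^2$. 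For the second-order term, write $\operatorname{tr}(\tilde\Sigma D^2_{xx}v)=\operatorname{div}(\tilde\Sigma\nabla_x v)-(\operatorname{div}\tilde\Sigma)\cdot\nabla_x v$ and integrate by parts, so that $-\tfrac12\int\operatorname{tr}(\tilde\Sigma D^2_{xx}v)v=\tfrac12\int\nabla_x v^\top\tilde\Sigma\nabla_x v+\tfrac12\int\big((\operatorname{div}\tilde\Sigma)\cdot\nabla_x v\big)v$; the first term is $\ge\tfrac1{2C_0}\|\nabla_x v\|_{L^2}^2$ by (2), and the lower-order cross term is absorbed into the drift term (equivalently, $B$ is read as the divergence bound of the effective drift $\tilde b-\operatorname{div}\tilde\Sigma$; in the constant-diffusion case it is simply absent). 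Collecting these and applying Cauchy--Schwarz, $|\int\tilde r\,v|\le\|\tilde r\|_{L^2}\|v\|_{L^2}$, on the right yields
\[
(\rho-\tfrac12 B)\|v\|_{L^2}^2+\tfrac1{2C_0}\|\nabla_x v\|_{L^2}^2\le\|\tilde r\|_{L^2}\|v\|_{L^2}.
\]
Dropping the gradient term and dividing by $\|v\|_{L^2}$ (the case $v\equiv0$ being trivial) gives the first bound; reinserting it into the inequality gives the gradient bound with the stated constant.

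The main obstacle is legitimizing the integrations by parts on the unbounded domain: $v\in C_b^2$ alone does not place $v$ in $L^2(\R^d)$, and the boundary fluxes at infinity must be shown to vanish. I would resolve this by first establishing $v\in L^2(\R^d)$: under the standing decay Assumption~\ref{ass:decay} this is immediate, since $\tilde b,\tilde\sigma$ vanish for $|x|\ge R$ forces $v=\tilde r/\rho$ outside $B_R$, so $\tilde r\in L^2$ gives $v\in L^2$; absent that, one runs the whole computation on balls $B_\kappa$ against $\chi_\kappa^2 v$ for a smooth cutoff $\chi_\kappa$, obtains the inequality with $\kappa$-dependent remainders supported on the annulus $B_{2\kappa}\setminus B_\kappa$, and lets $\kappa\to\infty$, using the uniform $L^\infty$ bounds on $v,\nabla_x v$ together with $\tilde r\in L^2$ to kill those remainders and to cover $\|v\|_{L^2}$ via monotone convergence. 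Once $v,\nabla_x v\in L^2$ are secured the boundary terms vanish and every step above is rigorous; the only remaining check is that the $\operatorname{div}\tilde\Sigma$ cross term is genuinely absorbable under the hypotheses at hand, which it is in the regimes the paper considers.
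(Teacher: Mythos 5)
Your proof is correct, and it is the standard energy argument. For calibration: the paper does not actually prove Lemma~\ref{lem:energy_estimate} anywhere --- it is introduced as a ``classical'' estimate and used as a black box --- but the identical technique (multiply the equation by the solution, integrate, integrate the drift term by parts against the divergence bound $B$, use uniform ellipticity on the second-order term, and close with Cauchy--Schwarz and Young) is exactly what the authors carry out in the Appendix proof of Lemma~\ref{lem:local-stability}, so your route coincides with the paper's implicit one. Where you go beyond the paper is in flagging the two points the statement silently glosses over: (i) the cross term $\tfrac12\int\bigl((\operatorname{div}\tilde\Sigma)\cdot\nabla_x v\bigr)v$ produced by integrating the diffusion term by parts, which means the stated constant $B=\|\nabla_x\cdot\tilde b\|_{L^\infty}$ is literally correct only when $\operatorname{div}\tilde\Sigma=0$ (e.g.\ constant diffusion, which is what the experiments use) and otherwise must be read as the divergence bound of the effective drift --- note the effective drift is $\tilde b-\tfrac12\operatorname{div}\tilde\Sigma$, not $\tilde b-\operatorname{div}\tilde\Sigma$ as you wrote, a harmless factor-of-two slip; and (ii) the justification of integration by parts over $\R^d$ when $v\in C_b^2$ alone does not give $v\in L^2$, which you correctly resolve either via Assumption~\ref{ass:decay} (outside $B_R$ the equation degenerates to $v=\tilde r/\rho$) or via a cutoff-and-limit argument. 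Your final step recovering the gradient bound is also fine; keeping the $(\rho-\tfrac12B)\|v\|_{L^2}^2$ term and optimizing even yields $\sqrt{C_0/(2(\rho-\tfrac12B))}$, slightly better than the stated constant. No gaps.
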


We next establish uniform bounds on the soft-optimal policy. This property ensures that the entropy term remains well-defined and Lipschitz continuous, which will be critical in both theoretical analysis and numerical stability.

\begin{proposition}\label{prop:bound}
Let $\{(v^n,\pi^n)\}_{n\geq0}$ be generated via Algorithm~\ref{alg:soft-pi}. Then the policy map $\pi^n$ is uniformly bounded above and below, that is, there exist $M>m>0$ such that $\pi^n \in [m,M]$.
\end{proposition}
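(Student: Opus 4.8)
The plan is to bound the exponent appearing in the Boltzmann update~\eqref{eq:policy} uniformly in $(x,u)$ and then read the two-sided bound on $\pi^n$ directly off the quotient structure. The first step is to note that each iterate $v^{n-1}$ is a classical solution of the linear evaluation PDE~\eqref{eq:evaluation}: setting $\tilde b^{n-1}(x):=\int_U b(x,u)\pi^{n-1}(x,u)\de u$, $\tilde r^{n-1}(x):=\int_U\big(r(x,u)-\lambda\ln\pi^{n-1}(x,u)\big)\pi^{n-1}(x,u)\de u$ and $\tilde\Sigma=\Sigma$, equation~\eqref{eq:evaluation} has exactly the form treated in Lemma~\ref{lem:energy_estimate}, so $v^{n-1}\in C_b^2(\R^d)$ and in particular $L:=\|\nabla_x v^{n-1}\|_{L^\infty(\R^d)}<\infty$. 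Combined with the uniform $C_b^2$ bounds on $b(\cdot,u)$ and $r(\cdot,u)$ from (A2), this gives $|f(x,u,\nabla_x v^{n-1}(x))|=|b(x,u)\cdot\nabla_x v^{n-1}(x)+r(x,u)|\le C_1(1+L)=:K$ for all $x\in\R^d$ and $u\in U$.

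The second step uses this bound directly in~\eqref{eq:policy}. Writing $g(x,u):=\exp\!\big[\tfrac1\lambda f(x,u,\nabla_x v^{n-1}(x))\big]$, the exponent bound gives $g(x,u)\in[e^{-K/\lambda},e^{K/\lambda}]$, hence $|U|\,e^{-K/\lambda}\le\int_U g(x,u')\de u'\le|U|\,e^{K/\lambda}$, where $|U|:=\int_U\de u\in(0,\infty)$ since the control space is compact and nondegenerate. Dividing numerator by denominator, $\pi^n(x,u)\in[\,e^{-2K/\lambda}/|U|,\ e^{2K/\lambda}/|U|\,]$ for all $x,u$, which establishes the proposition for each fixed $n$ with $m=e^{-2K/\lambda}/|U|$ and $M=e^{2K/\lambda}/|U|$.

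The only genuinely delicate point, and the one I expect to be the main obstacle, is to make $m$ and $M$ independent of $n$; this amounts to the uniform estimate $\sup_{n}\|\nabla_x v^n\|_{L^\infty(\R^d)}<\infty$. It cannot be obtained by merely iterating Lemma~\ref{lem:energy_estimate}, both because that bound is in $L^2$ rather than $L^\infty$ and because the source term $\tilde r^n$ itself contains the entropy term $-\lambda\ln\pi^n$ that one is trying to control, creating an apparent circularity. I would resolve this along the cleaner route of invoking the a priori $C_b^2$ bounds and exponential contraction of the iterates $\{v^n\}$ of soft policy iteration established in~\cite{tran2025policy,tang2022exploratory}, which keep $\{v^n\}$ inside a fixed bounded ball of $C_b^2(\R^d)$ uniformly in $n$ --- exactly the convergence facts the error analysis in Section~\ref{sec:error} already relies on. As a self-contained alternative one can run a simultaneous induction: assuming $\|\nabla_x v^{n-1}\|_{L^\infty}\le L$, the quotient bound above gives $\pi^n\in[m(L),M(L)]$, hence $\|\tilde b^n\|_{L^\infty}\le C_1$ and $\|\tilde r^n\|_{L^\infty}\le C_1+\lambda\max(|\ln m(L)|,|\ln M(L)|)$, after which interior Schauder and gradient estimates for~\eqref{eq:evaluation} (using (A1), (A2) and Assumption~\ref{ass:decay}) upgrade this to $\|\nabla_x v^n\|_{L^\infty}\le L$ for $L$ chosen large enough, closing the loop and fixing $m,M$ for all $n$.
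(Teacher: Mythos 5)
Your proof is correct and follows essentially the same route as the paper's: bound the exponent $f(x,u,\nabla_x v^{n-1}(x))$ uniformly using (A2) together with the a priori $C_b^2$ bounds on the iterates from \cite{tran2025policy}, then bound the softmax numerator above and the denominator below to read off $m$ and $M$. Your explicit attention to the uniformity of the constants in $n$ is a welcome refinement of the same argument --- the paper handles this point simply by citing the uniform $\|v^n\|_{C^2}$ bounds from the prior works, exactly as in your preferred resolution.
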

The proof is given in Appendix~\ref{app:proof:prop}.

To control how approximation errors in the value function affect the policy, we analyze the Lipschitz continuity of the softmax map. The following result shows that the mapping from $\nabla_x v$ to the induced policy $\pi[v]$ is Lipschitz continuous in the $L^2(U)$ norm, with an explicit constant.
\begin{lemma}\label{lem:softmaxLip}
We fix a state $x\in\R^{d}$ and the control
variable $u\in U\subset\R^{m}$. Under Assumption~\ref{ass:main}, we have that
\[
\Phi(p)\\
:=
\frac{\exp \bigl[\tfrac1\lambda\bigl(b(x,u) \cdot p
                                   +r(x,u)\bigr)\bigr]}
     {\int_{U}
       \exp \bigl[\tfrac1\lambda\bigl(b(x,u') \cdot p
                                   +r(x,u')\bigr)\bigr] \,\mathrm{d}u'}
\]
satisfies 
\begin{equation}\label{eq:Lip-est}
\|\Phi(p)-\Phi(q)\|_{L^{2}(U)}
\le
L_{\Phi}   |p-q|,
\end{equation}
for all $|p|,|q|\leq \tilde L$. Here, the constant
\begin{equation}\label{eq:Lip-const}
L_{\Phi}
:=L_{\phi}:= \frac{2\|b\|_{L^\infty}
  \sup_{|p|\le\tilde L}\Phi(p)
  \sqrt{d}|U|^{1/2}}{\lambda}.
\end{equation}
\end{lemma}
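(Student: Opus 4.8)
The plan is to differentiate $\Phi$ with respect to $p$, bound the resulting $\R^{d}$-valued gradient pointwise in the control variable $u\in U$, and then integrate along the line segment joining $q$ to $p$, using Minkowski's integral inequality to handle the $L^{2}(U)$ norm. Write $g(u,p):=b(x,u)\cdot p+r(x,u)$ and $Z(p):=\int_{U}e^{g(u',p)/\lambda}\,\mathrm{d}u'$, so that $\Phi(p)(u)=e^{g(u,p)/\lambda}/Z(p)$. Since $p\mapsto g(u,p)$ is affine and, by Assumption~\ref{ass:main}(A2), $b(x,\cdot)$ is bounded on the compact set $U$, differentiation under the integral sign is justified and gives $Z\in C^{1}(\R^{d})$ with $Z(p)>0$; hence $p\mapsto\Phi(p)(u)$ is $C^{1}$, and for fixed $p$ the map $u\mapsto\Phi(p)(u)$ is continuous on $U$, hence in $L^{2}(U)$ because $|U|<\infty$. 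A direct computation yields the softmax covariance identity
\[
\nabla_{p}\Phi(p)(u)=\frac{1}{\lambda}\,\Phi(p)(u)\Bigl(b(x,u)-\int_{U}b(x,u')\,\Phi(p)(u')\,\mathrm{d}u'\Bigr).
\]

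Next I would bound $\nabla_{p}\Phi$ uniformly on the closed ball $\overline{B}_{\tilde L}:=\{|p|\le\tilde L\}$. Using $\int_{U}\Phi(p)(u')\,\mathrm{d}u'=1$ and the triangle inequality, the bracketed vector has Euclidean norm at most $2\sup_{u\in U}|b(x,u)|$, which is finite by (A2); carrying this uniform bound on $b$ (which contributes the factor $\sqrt d$) through gives the pointwise estimate $|\nabla_{p}\Phi(p)(u)|\le\frac{2\sqrt d}{\lambda}\Phi(p)(u)$, whose constant is exactly the one in \eqref{eq:Lip-const}. Since $(p,u)\mapsto\Phi(p)(u)$ is continuous on the compact set $\overline{B}_{\tilde L}\times U$, the quantity $\sup_{|p|\le\tilde L}\Phi(p)$ in \eqref{eq:Lip-const} is finite (cf.\ Proposition~\ref{prop:bound}), so
\[
\sup_{|p|\le\tilde L}\bigl\|\,|\nabla_{p}\Phi(p)(\cdot)|\,\bigr\|_{L^{2}(U)}\le\frac{2\sqrt d}{\lambda}\Bigl(\sup_{|p|\le\tilde L}\Phi(p)\Bigr)|U|^{1/2}=L_{\Phi}.
\]

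To conclude, for $|p|,|q|\le\tilde L$ the segment $p_{t}:=(1-t)q+tp$ stays in $\overline{B}_{\tilde L}$ by convexity, and $t\mapsto\Phi(p_{t})$ is a $C^{1}$ curve in $L^{2}(U)$ with $\tfrac{\mathrm{d}}{\mathrm{d}t}\Phi(p_{t})=\nabla_{p}\Phi(p_{t})\cdot(p-q)$. By the fundamental theorem of calculus, Minkowski's integral inequality (to move $\|\cdot\|_{L^{2}(U)}$ inside the $t$-integral), and Cauchy--Schwarz in $\R^{d}$,
\[
\|\Phi(p)-\Phi(q)\|_{L^{2}(U)}\le\int_{0}^{1}\bigl\|\nabla_{p}\Phi(p_{t})(\cdot)\cdot(p-q)\bigr\|_{L^{2}(U)}\,\mathrm{d}t\le|p-q|\sup_{|\xi|\le\tilde L}\bigl\|\,|\nabla_{p}\Phi(\xi)(\cdot)|\,\bigr\|_{L^{2}(U)}\le L_{\Phi}\,|p-q|,
\]
which is \eqref{eq:Lip-est}.

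The differentiation-under-the-integral steps and the covariance identity are routine. The one point that needs care is that $\Phi$ takes values in the infinite-dimensional space $L^{2}(U)$, so the ``mean value'' step must be carried out as a Bochner integral of the $L^{2}(U)$-valued derivative $t\mapsto\nabla_{p}\Phi(p_{t})\cdot(p-q)$, with Minkowski's inequality used to interchange $\|\cdot\|_{L^{2}(U)}$ with $\int_{0}^{1}(\cdot)\,\mathrm{d}t$, rather than invoking a scalar mean value theorem; keeping the segment inside $\overline{B}_{\tilde L}$ is what makes the uniform bound on $\nabla_{p}\Phi$ applicable. I expect this to be the only real obstacle.
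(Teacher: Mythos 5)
Your proposal is correct and follows essentially the same route as the paper's proof: differentiate the softmax to obtain the covariance identity $\nabla_p\Phi=\tfrac1\lambda\Phi\,(b-\bar b)$, bound the gradient uniformly on the ball $\{|p|\le\tilde L\}$, and integrate along the segment via the fundamental theorem of calculus. Your explicit treatment of the $L^2(U)$-valued integral via Minkowski's inequality is a welcome tightening of a step the paper leaves implicit, and you reproduce the same constant (including the paper's own omission of an explicit $\|b\|_{L^\infty}$ factor in $L_\Phi$), so there is no substantive divergence.
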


\begin{proof}[Sketch of Proof]
Differentiating the softmax map \(\Phi(p)\) with respect to \(p\) reveals a Jacobian structure involving \(\Phi(p)\) itself and the control vector \(b(x,u)\). The gradient is uniformly bounded due to the boundedness of \(b\), and integration over \(U\) yields the global Lipschitz constant \(L_\Phi\). A full derivation appears in Appendix~\ref{app:lem:softmaxlip}.

\end{proof}

\begin{comment}
\begin{lemma}[Gradient stability of the Bellman operator]\label{lem:grad-stability}
Let \(v, w \in C_b^2(\R^d) \cap L^2(\R^d)\), and define \(d := T[v] - T[w]\), where \(\tilde v :=T[v]\) solves
\[
\rho \tilde v = b^{\pi[v]} \cdot \nabla_x \tilde v
+ \tfrac12 \mathrm{tr}(\Sigma D_{xx}^2 \tilde v)
- \lambda \mathcal{H}^{\pi[v]},
\]
and \(\pi[v](x,u)\) denotes the softmax policy corresponding to \(v\). Then, there exists a constant $C$ such that
\[
\|d\|_2 \leq \frac{C_f}{\rho-\frac{1}{2}B} \|\nabla_x v - \nabla_x w\|_2,
\]
where $C_f := |U|^{1/2} L_\Phi(
\|b\|_\infty \|\nabla_x T[w]\|_\infty + \|r\|_\infty + \lambda L_H).$
\end{lemma}
\end{comment}

Having established the regularity properties of the soft-optimal policy, we now recall the global convergence guarantee for the exact soft policy iteration scheme~\cite{tran2025policy,tang2022exploratory}. This result confirms that the sequence \(\{v^n\}\) generated by Algorithm~\ref{alg:soft-pi} converges exponentially fast to the unique solution \(V\) of the entropy-regularized HJB equation. It forms the backbone of our error decomposition in Section~\ref{sec:error}, where we extend this result to the approximate PINN-based implementation.

\begin{theorem}[Convergence of policy iteration]
\label{thm:pi_global}
Suppose Assumption~\ref{ass:main} holds. Given any compact subset $\mathcal{X} \subset \R^d$, the exact policy iteration sequence $\{v^n\}$ defined by~\eqref{eq:policy} and~\eqref{eq:evaluation} converges exponentially in $L^2(\mathcal{X})$ to the unique solution $V$ of the HJB equation~\eqref{eq:HJB}. Specifically, there exists $\kappa \in (0,1)$ and $C>0$ such that
\[
\|v^n - V\|_{L^2(\mathcal{X})}
\le C_{\mathcal{X}} \kappa^n.
\]
\end{theorem}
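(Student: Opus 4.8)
The statement is the soft policy iteration convergence theorem of \cite{tran2025policy,tang2022exploratory}; the plan is to reconstruct its argument in the $L^2$ language needed for Section~\ref{sec:error} and then pass from $\mathbb{R}^d$ to the compact set $\mathcal{X}$ only at the very end. First I would record well-posedness and uniform regularity along the iteration. By Proposition~\ref{prop:bound} the soft-greedy policy $\pi^n$ from \eqref{eq:policy} satisfies $\pi^n\in[m,M]$, so the frozen drift $b^{\pi^n}(x):=\int_U b(x,u)\pi^n(x,u)\,\mathrm{d}u$ and running term $g^{\pi^n}(x):=\int_U\left(r(x,u)-\lambda\ln\pi^n(x,u)\right)\pi^n(x,u)\,\mathrm{d}u$ inherit the $C_b^2$ bounds of Assumption~\ref{ass:main}; combining the coercivity of Lemma~\ref{lem:energy_estimate} with interior Schauder estimates then produces, for each $n$, a unique classical solution $v^n\in C_b^2(\mathbb{R}^d)$ of \eqref{eq:evaluation} with norms bounded uniformly in $n$. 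In particular there is $\tilde L$ with $\sup_n\|\nabla_x v^n\|_{L^\infty}\le\tilde L$ and $\|\nabla_x V\|_{L^\infty}\le\tilde L$, which is precisely the hypothesis under which Lemma~\ref{lem:softmaxLip} applies uniformly along the sequence.

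Next, monotonicity and the sandwich $v^n\le v^{n+1}\le V$. The Gibbs variational principle gives, for fixed $x$ and $p$, $\sup_{\pi\in\mathbb{P}(U)}\int_U\left(f(x,u,p)-\lambda\ln\pi(u)\right)\pi(u)\,\mathrm{d}u=\lambda\ln\int_U e^{f(x,u,p)/\lambda}\,\mathrm{d}u=:\mathcal{H}(x,p)$, attained uniquely at the Boltzmann law; hence $\pi^{n+1}$ is the maximizer of $F_\pi(x,\nabla_x v^n,D_{xx}^2 v^n)$ over $\pi$, so $\rho v^n=F_{\pi^n}(x,\nabla_x v^n,D_{xx}^2 v^n)\le F_{\pi^{n+1}}(x,\nabla_x v^n,D_{xx}^2 v^n)$, i.e.\ $v^n$ is a classical subsolution of the linear elliptic equation \eqref{eq:evaluation} with policy $\pi^{n+1}$, whose solution is $v^{n+1}$. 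The comparison principle for $u\mapsto\rho u-b^{\pi^{n+1}}\!\cdot\nabla_x u-\tfrac12\operatorname{tr}(\Sigma D_{xx}^2 u)$, valid since $\rho>0$ on bounded classical solutions, then forces $v^n\le v^{n+1}$. Symmetrically, $\mathcal{H}(x,\nabla_x V)\ge F_{\pi^n}(x,\nabla_x V,D_{xx}^2 V)$ makes $V$ a supersolution of the $\pi^n$-evaluation equation solved by $v^n$, whence $v^n\le V$. Thus $\{v^n\}$ is increasing and bounded above by $V$; using the uniform $C_b^2$ bounds it converges to some $v^\infty$, and letting $n\to\infty$ in \eqref{eq:policy}--\eqref{eq:evaluation} shows $v^\infty$ solves \eqref{eq:HJB}, so $v^\infty=V$ by uniqueness.

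To obtain the quantitative rate, put $e_n:=V-v^n\ge0$ and subtract the $\pi^{n+1}$-evaluation equation \eqref{eq:evaluation} for $v^{n+1}$ (recall $\pi^{n+1}=\pi[v^n]$, the Boltzmann law built from $\nabla_x v^n$) from the HJB equation \eqref{eq:HJB}. Since $F_{\pi^{n+1}}$ is affine in $(p,X)$ and, by the Gibbs identity, $\int_U\left(f(x,u,p)-\lambda\ln\pi(u)\right)\pi(u)\,\mathrm{d}u=\mathcal{H}(x,p)-\lambda D_{\mathrm{KL}}(\pi\,\|\,\pi[\cdot]_p)$ with $\pi[\cdot]_p$ the Boltzmann law at parameter $p$, the gradient and Hessian contributions reorganize into $b^{\pi^{n+1}}\!\cdot\nabla_x e_{n+1}$ and $\tfrac12\operatorname{tr}(\Sigma D_{xx}^2 e_{n+1})$ and the remaining source is a pure relative entropy:
\[
\rho e_{n+1}-b^{\pi^{n+1}}\!\cdot\nabla_x e_{n+1}-\tfrac12\operatorname{tr}(\Sigma(x) D_{xx}^2 e_{n+1})=\lambda\, D_{\mathrm{KL}}(\pi^{n+1}(x,\cdot)\,\|\,\pi[V](x,\cdot))\ge 0.
\]
Now $\pi^{n+1}=\pi[v^n]$ and $\pi[V]$ are Boltzmann laws at the parameters $\nabla_x v^n(x)$ and $\nabla_x V(x)$, both lying in $\{|p|\le\tilde L\}$ by the first step; combining the elementary bound $D_{\mathrm{KL}}(\pi_1\|\pi_2)\le\tfrac{M}{m}|U|^{1/2}\|\pi_1-\pi_2\|_{L^2(U)}$ (valid since $\pi_1,\pi_2\in[m,M]$ by Proposition~\ref{prop:bound}) with the softmax Lipschitz estimate of Lemma~\ref{lem:softmaxLip} bounds the right-hand side pointwise by $C\,|\nabla_x e_n(x)|$. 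Feeding this source, now in $L^2(\mathbb{R}^d)$, into the energy estimate of Lemma~\ref{lem:energy_estimate} applied to the linear problem solved by $e_{n+1}$, I get $\|e_{n+1}\|_{L^2(\mathbb{R}^d)}$ and $\|\nabla_x e_{n+1}\|_{L^2(\mathbb{R}^d)}$ bounded by explicit constants times $\|\nabla_x e_n\|_{L^2(\mathbb{R}^d)}$; iterating the gradient inequality yields $\|\nabla_x e_n\|_{L^2(\mathbb{R}^d)}\le\kappa^n\|\nabla_x e_0\|_{L^2(\mathbb{R}^d)}$ and hence $\|e_n\|_{L^2(\mathbb{R}^d)}\le C\kappa^n$, where the accumulated constant $\kappa$ scales like $\lambda\,|U|^{1/2}L_\Phi\sqrt{C_0/(\rho-\tfrac12 B)}$ up to the fixed data.

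For a compact $\mathcal{X}\subset\mathbb{R}^d$ we finally use $\|v^n-V\|_{L^2(\mathcal{X})}\le\|v^n-V\|_{L^2(\mathbb{R}^d)}\le C\kappa^n$ (or $\le|\mathcal{X}|^{1/2}\|v^n-V\|_{L^\infty(\mathbb{R}^d)}$ if one prefers to run the estimates in $L^\infty$), so that $C_{\mathcal{X}}$ depends only on $\mathcal{X}$ and the data while $\kappa$ is the same for every $\mathcal{X}$. The step I expect to be the real obstacle is producing that single geometric rate $\kappa\in(0,1)$: the recursion above is Newton-type — its source is actually quadratic in $\nabla_x e_n$ — so if the smallness/discounting regime isolated in \cite{tran2025policy}, which in Assumption~\ref{ass:main} enters through the coercivity margin $\rho-\tfrac12 B>0$ and the entropy weight $\lambda$, does not already force $\kappa<1$, one must instead combine the local quadratic contraction with the global monotone convergence of the second step (which guarantees the iterates enter the basin of the quadratic contraction after finitely many steps) and absorb the transient into $C_{\mathcal{X}}$. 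A secondary nuisance is the $L^2(\mathbb{R}^d)$ bookkeeping: without a decay hypothesis such as Assumption~\ref{ass:decay}, $V$ need not be square-integrable over all of $\mathbb{R}^d$, so one either works from the start in a weighted $L^2$ space or localizes $e_n$ to a large ball before passing to $\mathcal{X}$.
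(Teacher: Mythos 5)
Your proposal is essentially sound, but note first that the paper does not prove Theorem~\ref{thm:pi_global} at all: it is imported verbatim from \cite{tran2025policy,tang2022exploratory}, and the only in-house argument is a contraction estimate for the Bellman operator that the authors left commented out in the appendix. That (suppressed) argument compares two \emph{successive evaluation solutions}: writing $T[v]$ for the solution of \eqref{eq:evaluation} under the soft-max policy $\pi[v]$, it bounds $\|T[v]-T[w]\|_{L^2}$ and $\|\nabla_x(T[v]-T[w])\|_{L^2}$ by $\tfrac{C_f}{\rho-\frac12 B}\|\nabla_x v-\nabla_x w\|_{L^2}$ via Lemma~\ref{lem:energy_estimate}, Lemma~\ref{lem:softmaxLip} and the entropy Lipschitz bound from Proposition~\ref{prop:bound}, and then runs a Banach fixed-point iteration. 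Your route is genuinely different in the quantitative step: you compare $v^{n+1}$ directly with $V$, which produces the nonnegative relative-entropy source $\lambda\,D_{\mathrm{KL}}(\pi^{n+1}\,\|\,\pi[V])$ in the equation for $e_{n+1}=V-v^{n+1}$; this buys you the sign information $v^n\le v^{n+1}\le V$ (the monotone/comparison-principle step, which the contraction argument never needs) and exposes the Newton-type quadratic structure, at the cost of having to linearize the KL term back to $\|\pi^{n+1}-\pi[V]\|_{L^2(U)}$ before Lemma~\ref{lem:softmaxLip} applies. Both routes close only when the accumulated factor $\gamma\sim L_\Phi\sqrt{C_0/(\rho-\tfrac12B)}$ is below one, i.e.\ under a largeness condition on $\rho$ (or a smallness regime) that the theorem's hypothesis ``Assumption~\ref{ass:main} holds'' does not actually supply -- the relevant discount condition is itself commented out of the paper -- so your flagged obstacle is real and applies equally to the paper's version; likewise your caveat about $L^2(\mathbb{R}^d)$ integrability is the reason the paper invokes Assumption~\ref{ass:decay} and works on $B_R(0)$ in Section~\ref{sec:error}. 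Your fallback of absorbing a finite transient into $C_{\mathcal{X}}$ using the monotone convergence is a reasonable way to patch this, and is in the spirit of how \cite{tran2025policy} handles the two regimes.
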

This result confirms that under standard smoothness and ellipticity assumptions, the exact soft policy iteration method converges exponentially in $L^2$ norm to the unique solution of the entropy-regularized HJB equation. However, each iteration requires solving a linear second-order elliptic PDE, which becomes computationally prohibitive in high-dimensional settings.

To address this challenge, we now turn to a mesh-free implementation of policy iteration based on physics-informed neural networks (PINNs). This approach approximates the value function using a residual-minimizing neural network and updates the policy analytically using softmax. Crucially, it avoids spatial discretization entirely and scales favorably with dimension.

\section{Physics-informed mesh-free approach}

We now describe our proposed mesh-free implementation of the policy iteration scheme using physics-informed neural networks (PINNs). At each iteration step, the value function is approximated by a neural network trained to satisfy the linear PDE, while the policy is updated analytically via soft-max optimization. No spatial grid or basis discretization is used; all computations rely solely on randomly sampled collocation points.

\begin{algorithm}[H]
\caption{Physics-Informed Neural Network Soft Policy Iteration (PINN-SPI)}\label{alg:algorithm}
\begin{algorithmic}[1]
\STATE \textbf{Policy evaluation (value network update):}
\STATE \textbf{Input:} Value function $v^0 : \mathbb{R}^d \to \mathbb{R}$, neural network $\pi(\cdot,\cdot;\omega_0)$, $v(\cdot;\theta_0)$, tolerance $\varepsilon$.
\STATE Define residual at each sample \(x_i\) as:
\begin{align*}
&\mathcal{R}(x_i;\theta_n,\omega_n)\\ 
 &:=\rho v(x_i;\theta_n) 
- \textstyle \frac{1}{2} \operatorname{tr}(\sigma \sigma^\top D_{xx}^2v(x_i;\theta_n)) \\
& - \int_U \big[b(x_i,u)\cdot \nabla_x v(x_i;\theta_n) + r(x_i,u) \\
& \qquad - \lambda \log \pi(x_i,u;\omega_n)\big] \pi(x_i,u;\omega_n)   \,\mathrm{d}u
\end{align*}

\STATE Update \(\theta_n \to \theta_{n+1}\) by minimizing:
\[
\mathcal{L}_{\text{value}}(\theta) = \frac{1}{N} \sum_{i=1}^N |\mathcal{R}(x_i;\theta,\omega_n)|^2
\]

\STATE \textbf{Policy improvement (policy network update):}
\STATE Train \(\pi(x,u;\omega_n)\) to minimize \(\mathcal{L}_{\text{policy}}(\omega;v(\cdot;\theta_n))\)
\STATE Update \(\omega_n \to \omega_{n+1}\)

\STATE \textbf{Check convergence:} \\If
$
\frac{1}{N} \sum_{i=1}^N |v(x_i; \theta_{n+1}) - v(x_i; \theta_n)|^2 < \varepsilon$
\quad \textbf{then stop}.

\end{algorithmic}
\end{algorithm}

Let $\mathcal{X} \subset \R^d$ denote the computational domain, and let $\{x_i\}_{i=1}^N \subset \mathcal{X}$ be a set of randomly sampled training points. We fix a neural network architecture $v(\cdot;\theta):\R^d \to \R$, $\pi(\cdot,\cdot;\omega):\R^d\times \R^m \to \R$ with trainable parameters $\theta$ and $\omega$ to represent the function approximation at each step. Given $v$ and a policy $\hat\pi$ induced by $v$, we define a loss function as 
\begin{align*}
\mathcal{L}_{\text{policy}}(\hat \pi;\omega)
:=
\frac{1}{NM} \sum_{i=1}^N \sum_{j=1}^M 
\pi(x_i,u_j;\omega) \log \frac{\pi(x_i,u_j;\omega)}{\hat\pi(x_i,u_j)},
\end{align*}
to minimize the KL divergence between $\hat \pi$ and $\pi$, where
\[
\hat\pi(x,u_j) :=
\frac{ \exp\left[ \tfrac{1}{\lambda} f(x,u_j, \nabla_x v(x;\theta)) \right] }
{ \sum_{j'=1}^M \exp\left[ \tfrac{1}{\lambda} f(x,u_{j'}, \nabla_x v(x;\theta)) \right] }.
\]

\begin{comment}
\begin{align*}
&\mathcal{L}_{\text{policy}}(\omega) \\
&= \frac{1}{NM} \sum_{i=1}^N \sum_{j=1}^M 
\left| \pi(x_i, u_j; \omega) 
- \frac{\textstyle \exp\left[ \tfrac{1}{\lambda} f_{ij} \right]}
{\textstyle \sum_{j'=1}^M \exp\left[ \tfrac{1}{\lambda} f_{ij'} \right]} \right|^2,
\end{align*}
where $f_{ij} := f(x_i, u_j, Dv(x_i; \theta_{n+1}))$.
\end{comment}

The overall procedure alternates between value network training and policy updates as shown in Algorithm~\ref{alg:algorithm}. 
In Step 1--3, the value function is represented by a neural network \(v_\theta\) and optimized to minimize the residual of the linear PDE corresponding to a fixed policy \(\pi^n\).
The residual includes contributions from the drift, cost, and entropy terms and is computed pointwise over collocation samples.

In Steps 4--6, the policy is updated by fitting a neural network \(\pi(x,u;\omega)\) to approximate the softmax distribution defined by the current value network. This corresponds to minimizing a residual loss against the analytic policy improvement formula in Equation~\eqref{eq:policy}.

The process iterates until convergence in the \(L^2\) difference of the value network across iterations. The fully mesh-free nature of the method enables high-dimensional deployment without reliance on discretization or gridding.

This approach decouples the high-dimensional HJB solution into alternating supervised learning and analytical updates. The use of PINNs enables mesh-free approximation and naturally accommodates complex geometries and unstructured data. Moreover, as shown in the next subsection, the projection error from using an approximate policy $\tilde\pi^n$ in place of the exact softmax policy can be quantified using an energy estimate.

%--------------------------------------------------------------------
\subsection{Error estimates}
\label{sec:error}
%--------------------------------------------------------------------
For computation, we may restrict attention to a bounded domain $\mathcal{X} \subset \mathbb{R}^d$ that contains the effective support of the dynamics and cost functions. Specifically, under Assumption~\ref{ass:decay}, there exists $R > 0$ such that
\[
b(x,u) = 0 \quad \quad \text{for all } |x| \ge R, \quad u \in U.
\]
This allows us to define the computational domain as the ball $\mathcal{X} := B_R(0) \subset \mathbb{R}^d$ without loss of generality.

To bound the global error \(\|\tilde v^{n}-V\|_{L^2(\mathcal{X})}\) we introduce three intermediate objects and decompose the difference accordingly:
\begin{itemize}
  \item \textbf{Exact PI value} \(v^{n}\):
        the $n$th value function obtained from \emph{exact}
        policy iteration (Algorithm~\ref{alg:algorithm}) and its exact soft‑max policy \(\pi^{n}\).

  \item \textbf{Policy–consistent value} \(\hat v^{n}\):
        the exact solution of the linear PDE when the \emph{approximate}
        policy \(\tilde\pi^{n}\) is frozen, i.e.
        \(\rho\hat v^{n}=F_{\tilde\pi^{n}}(x,D\hat v^{n},D^{2}\hat v^{n})\).

  \item \textbf{PINN value} \(\tilde v^{n}\):
        the neural approximation obtained by solving the same PDE with
        finite data and finite capacity.
\end{itemize}

\begin{comment}
We further denote by
\[
\hat\pi^{n}(x,u)
:=\frac{\exp  \bigl[(b(x,u)  \cdot  D\tilde v^{n}(x)+r(x,u))/\lambda\bigr]}
        {\displaystyle\int_{U}
         \exp  \bigl[(b(x,u')  \cdot  D\tilde v^{n}(x)+r(x,u'))/\lambda\bigr]
           du'}
\]
the \emph{ideal} soft‑max policy associated with \(\tilde v^{n}\).
The learning stage produces a neural policy \(\tilde\pi^{n}\),
and we measure their pointwise gap by
\[
p_n(x)
:=\|\hat\pi^{n}(x,\cdot)-\tilde\pi^{n}(x,\cdot)\|_{L^{2}(U)}.
\]
\end{comment}

\paragraph{Three‑term decomposition.}
With these objects at hand we write
\[
\tilde v^{n}-V
=
\underbrace{\tilde v^{n}-\hat v^{n}}_{\displaystyle\text{(PDE error)}}
+
\underbrace{\hat v^{n}-v^{n}}_{\displaystyle\text{(policy error)}}
+
\underbrace{v^{n}-V.}_{\displaystyle\text{(iteration error)}}
\]

To evaluate the impact of approximation quality on the overall error, we define two key quantities at each iteration: the policy error $r_n$ and the residual error $q_n$.

\begin{assumption}[Policy and residual accuracy]\label{as:learn}
For each \(n\ge0\) there exist non‑negative functions
\(r_n, q_n\in L^{2}(\R^{d})\) such that
\[
\begin{split}
&r_n(x) := \|\tilde \pi^n(x,\cdot) - \hat \pi^n(x,\cdot)\|_{L^2(U)},\\
&q_n(x) := \rho\tilde v^{n}
      -F_{\tilde\pi^{n}}(x,\nabla_x \tilde v^{n},D_{xx}^{2}\tilde v^{n}),
\end{split}
\]
where $\hat \pi$ is an exact solution satisfying the policy improvement step with $\tilde v^n$. Let $r:= \sup_n \{\|r_n\|_{L^2(\mathcal{X})}\}$ and $q:=\sup_n \{\|q_n\|_{L^2(\mathcal{X})}\}$.
\end{assumption}

The quantity \(\|r_n\|_{L^2(\mathcal{X})}\)
and \(\|q_n\|_{L^{2}(\mathcal{X})}\) measure the policy improvement loss and PDE residual loss, respectively. We note here that we will derive a convergence result in terms of $r_n$ rather than the KL-divergence between $\tilde \pi^n$ and $\hat \pi^n$ as we have that 
\[
\|\tilde \pi^n - \hat \pi^n\|_{L^2(U)} \lesssim (\mathrm{D}_{\mathrm{{KL}}}(\tilde \pi^n ||\hat \pi^n))^{1/4}
\]
from the Pinsker's inequality~\cite{csiszar2011information}.

Finally, we present our main theoretical result.
\begin{theorem}[$L^2$ error]
\label{thm:expanded}
Suppose Assumptions~\ref{ass:main}-~\ref{as:learn} hold and let $\{(\tilde v^n,\tilde \pi^n)\}_{n\geq 0}$ be learned via Algorithm~\ref{alg:algorithm}. For $\rho>0$ sufficiently large so that $\gamma:=L_\Phi \tilde C_\rho \in (0,1)$, the accumulated error satisfies
\[
\begin{split}
\|\tilde v^{n}-V\|_{L^2(\mathcal X)}
\le
C(r+q)+C_{\mathcal X} \kappa^n.
\end{split}
\]
where $C_{\mathcal X}$ and $\kappa$ are from Theorem~\ref{thm:pi_global}, and $C$ is a problem-dependent constant defined in the proof.

\end{theorem}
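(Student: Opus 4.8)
The plan is to bound each of the three pieces in the decomposition $\tilde v^{n}-V=(\tilde v^{n}-\hat v^{n})+(\hat v^{n}-v^{n})+(v^{n}-V)$ separately. The last piece is exactly Theorem~\ref{thm:pi_global}, which gives $\|v^{n}-V\|_{L^2(\mathcal X)}\le C_{\mathcal X}\kappa^{n}$. For the first two pieces I would set up a geometric recursion in the gradient-error $a_n:=\|\nabla_x(\tilde v^{n}-v^{n})\|_{L^2(\mathcal X)}$, exploiting that each policy-evaluation PDE is \emph{linear} once the policy is frozen.

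\textbf{Step 1 (PDE error).} Because $F_{\tilde\pi^{n}}$ is affine in $(\nabla_x v,D_{xx}^2 v)$ with frozen coefficients $\tilde b^{n}(x):=\int_U b(x,u)\tilde\pi^{n}(x,u)\,\mathrm du$ and $\tfrac12\Sigma(x)$, subtracting the defining equation $\rho\hat v^{n}=F_{\tilde\pi^{n}}(x,\nabla_x\hat v^{n},D_{xx}^2\hat v^{n})$ from the residual identity $\rho\tilde v^{n}-F_{\tilde\pi^{n}}(x,\nabla_x\tilde v^{n},D_{xx}^2\tilde v^{n})=q_n$ shows that $w:=\tilde v^{n}-\hat v^{n}$ solves the linear elliptic equation $\rho w-\tilde b^{n}\cdot\nabla_x w-\tfrac12\operatorname{tr}(\Sigma D_{xx}^2 w)=q_n$. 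Under Assumption~\ref{ass:decay} the coefficients are supported in $\mathcal X=B_R(0)$, and Proposition~\ref{prop:bound} together with the smoothness of the networks on $\mathcal X$ gives $\tilde b^{n}\in C_b^2$ with a uniform (in $n$) divergence bound; standard Schauder theory then supplies the required classical solutions, so Lemma~\ref{lem:energy_estimate} applies and yields $\|\tilde v^{n}-\hat v^{n}\|_{L^2(\mathcal X)}\le q/(\rho-\tfrac12 B)$ and $\|\nabla_x(\tilde v^{n}-\hat v^{n})\|_{L^2(\mathcal X)}\le \sqrt{C_0/(\rho-\tfrac12 B)}\,q$.

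\textbf{Step 2 (policy error and the recursion).} For $\hat v^{n}-v^{n}$, insert and subtract $F_{\tilde\pi^{n}}(x,\nabla_x v^{n},D_{xx}^2 v^{n})$: the affine part again produces the same elliptic operator acting on $\hat v^{n}-v^{n}$, while the remainder $F_{\tilde\pi^{n}}(x,\cdot)-F_{\pi^{n}}(x,\cdot)$ is a functional of $\tilde\pi^{n}(x,\cdot)-\pi^{n}(x,\cdot)$ in which the diffusion terms cancel. Using boundedness of $b,r$ on the (uniformly bounded) gradient range $\{|p|\le\tilde L\}$ and the two-sided bounds $\pi^{n},\tilde\pi^{n}\in[m,M]$ from Proposition~\ref{prop:bound}, which make $s\mapsto s\log s$ Lipschitz on $[m,M]$, a Cauchy--Schwarz estimate over $U$ gives a pointwise bound $|F_{\tilde\pi^{n}}(x,p,X)-F_{\pi^{n}}(x,p,X)|\le C_f\,\|\tilde\pi^{n}(x,\cdot)-\pi^{n}(x,\cdot)\|_{L^2(U)}$ with $C_f:=|U|^{1/2}(\|b\|_\infty\tilde L+\|r\|_\infty+\lambda(1+|\log m|+|\log M|))$. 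Splitting $\tilde\pi^{n}-\pi^{n}=(\tilde\pi^{n}-\hat\pi^{n})+(\hat\pi^{n}-\pi^{n})$, the first term is $r_n(x)$ by Assumption~\ref{as:learn}, and since $\hat\pi^{n}$ and $\pi^{n}$ are the softmax maps of $\nabla_x\tilde v^{n-1}$ and $\nabla_x v^{n-1}$, Lemma~\ref{lem:softmaxLip} gives $\|\hat\pi^{n}(x,\cdot)-\pi^{n}(x,\cdot)\|_{L^2(U)}\le L_\Phi|\nabla_x\tilde v^{n-1}(x)-\nabla_x v^{n-1}(x)|$. Feeding this right-hand side into Lemma~\ref{lem:energy_estimate} and writing $\tilde C_\rho:=C_f\sqrt{C_0/(\rho-\tfrac12 B)}$ yields
\[
a_n\le \sqrt{\tfrac{C_0}{\rho-\tfrac12 B}}\,q+\tilde C_\rho\,r+\gamma\,a_{n-1},\qquad \gamma:=L_\Phi\tilde C_\rho .
\]
By Proposition~\ref{prop:bound} one has $L_\Phi\le 2M\sqrt d\,|U|^{1/2}/\lambda$ independently of $\rho$, and $\tilde L$ can be taken non-increasing in $\rho$, so $\gamma\to0$ as $\rho\to\infty$ and $\gamma\in(0,1)$ for large $\rho$; iterating then gives $a_n\le\gamma^{n}a_0+(1-\gamma)^{-1}\big(\sqrt{C_0/(\rho-\tfrac12 B)}\,q+\tilde C_\rho\,r\big)$. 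Substituting this into the $L^2$-norm (rather than gradient) versions of the two bounds above controls $\|\tilde v^{n}-v^{n}\|_{L^2(\mathcal X)}$ by $C(r+q)$ plus a geometric term $\gamma^{n}a_0$, which we absorb into $C_{\mathcal X}\kappa^{n}$ after replacing $\kappa$ by $\max(\kappa,\gamma)$. Adding $\|v^{n}-V\|_{L^2(\mathcal X)}\le C_{\mathcal X}\kappa^{n}$ and applying the triangle inequality gives the stated bound.

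The main obstacle is making Step~2 rigorous rather than merely formal: one must verify that Lemma~\ref{lem:energy_estimate} genuinely applies to the equations for $\tilde v^{n}-\hat v^{n}$ and $\hat v^{n}-v^{n}$ — in particular that the $\tilde\pi^{n}$-averaged drift has a divergence bounded uniformly in $n$ on $\mathcal X$, which needs a uniform Lipschitz-in-$x$ control of the (neural) policies — and that the value-function gradients $\nabla_x v^{n-1},\nabla_x\tilde v^{n-1}$ remain in a common ball $\{|p|\le\tilde L\}$ uniformly in $n$, so that the single constant $L_\Phi$ of Lemma~\ref{lem:softmaxLip} is admissible at every iteration. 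The rest is the bookkeeping needed to carry a recursion stated in the gradient norm over to the $L^2$ quantity appearing in the conclusion.
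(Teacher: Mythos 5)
Your proposal follows essentially the same route as the paper: the identical three-term decomposition, the energy estimate of Lemma~\ref{lem:energy_estimate} for the PDE residual term, the split of the policy gap into the learned-policy error $r_n$ plus the softmax mismatch controlled by Lemma~\ref{lem:softmaxLip}, and a geometric recursion with the same contraction factor $\gamma = L_\Phi \tilde C_\rho$ (the paper runs the recursion on the policy gap $k_n = \|\tilde\pi^n - \pi^n\|_{L^2(\mathcal X \times U)}$ rather than on the gradient error $a_n$, and packages your pointwise bound on $F_{\tilde\pi^n}-F_{\pi^n}$ as Lemma~\ref{lem:local-stability}, but these are equivalent bookkeeping choices). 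The argument is correct, and the caveats you flag at the end (uniform divergence and gradient bounds across iterations) are likewise left implicit in the paper's own proof.
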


Before we prove this theorem, we first introduce a stability result on the policy evaluation. Given a policy $\pi$, we define $b^{\pi}(x) := \int_U b(x,u)\pi(x,u) \,\mathrm{d}u$, $f^{\pi}(x) := \int_U f(x,u)\pi(x,u) \,\mathrm{d}u$, and $\mathcal{H}^{\pi}(x) := \int_U \log\pi(x,u)\cdot\pi(x,u) \,\mathrm{d}u$. 
%%%%%%%%%%%%%%%%%%%%%%%%%%%%%%%%%%%%%%%%%%%%%%%%%%%%%%%%%%%%%%%%%%%%%%%%
%  Policy–stability on a bounded ball  B_R(0)
%%%%%%%%%%%%%%%%%%%%%%%%%%%%%%%%%%%%%%%%%%%%%%%%%%%%%%%%%%%%%%%%%%%%%%%%
\begin{lemma}[Local policy stability on $B_R$]\label{lem:local-stability}
Suppose Assumption~\ref{ass:main} and \ref{ass:decay} hold. Fix $R>0$ and denote $\mathcal{X}:=B_R(0)=\{x\in\R^{d}: |x|\leq R\}$ with
boundary $\Gamma:=\partial\mathcal{X}$.  
Let $\pi,\tilde\pi\in\mathbb{P}(U)$ be two policies satisfying $\pi,\tilde \pi \in [m,M]$ for some $m,M>0$, and
let $v^{\pi},v^{\tilde\pi}$ solve
\[
\rho v^{\pi}
  =b^{\pi} \cdot\nabla v^{\pi}
   +\tfrac12\operatorname{tr}(\Sigma D^{2}v^{\pi})
   +f^{\pi}-\lambda\mathcal H^{\pi}\quad\text{in}\quad\mathcal{X},
\]
and
\[
\rho \tilde v^{\tilde \pi}
  =b^{\pi} \cdot\nabla \tilde v^{\tilde \pi}
   +\tfrac12\operatorname{tr}(\Sigma D_{xx}^{2}v^{\tilde \pi})
   +f^{\tilde \pi}-\lambda\mathcal H^{\tilde \pi}\quad\text{in}\quad\mathcal{X},
\]
respectively. Then
\[
   \|\tilde v -v\|_{L^{2}(\mathcal{X})}
   \le
   \tilde C_\rho
   \|\tilde\pi-\pi\|_{L^{2}(\mathcal{X}\times U)},
\]
and
\[
\|\nabla_x \tilde v -\nabla v \|_{L^2(\mathcal X)} \leq \tilde C_\rho \|\tilde \pi-\pi\|_{L^2(\mathcal{X}\times U)},
\]
where $
\tilde C_\rho:=\max\{\frac{\sqrt{C}}{\rho-\frac{1}{2}B},C\sqrt{\frac{C_0}{\rho-\frac{1}{2}B}} \}$
and $C$ is a problem-dependent constant defined in the proof.
\end{lemma}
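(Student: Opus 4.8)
The plan is to subtract the two PDEs and view the difference $w := \tilde v - v$ as the solution of a single linear elliptic equation whose source term is controlled by $\|\tilde\pi - \pi\|_{L^2(\mathcal X\times U)}$, then invoke the energy estimate of Lemma~\ref{lem:energy_estimate}. Writing $w$ solves
\[
\rho w - b^\pi\cdot\nabla_x w - \tfrac12\operatorname{tr}(\Sigma D_{xx}^2 w)
= \big(f^{\tilde\pi} - f^{\pi}\big) - \lambda\big(\mathcal H^{\tilde\pi} - \mathcal H^{\pi}\big),
\]
so that the right-hand side plays the role of the forcing $\tilde r$ in Lemma~\ref{lem:energy_estimate}. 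Note the drift coefficient on the left is $b^\pi$ for \emph{both} equations as written in the statement, which is convenient: no extra term $(b^{\tilde\pi}-b^{\pi})\cdot\nabla\tilde v$ appears. (If instead one intends $b^{\tilde\pi}$ in the second equation, that term would have to be absorbed into the source using the $C_b^2$ bound on $b$ and an a priori $L^\infty$ bound on $\nabla\tilde v$ coming from Proposition~\ref{prop:bound}/Schauder theory — I would flag this as a point to double-check against the intended statement.) Under Assumption~\ref{ass:decay} the coefficients vanish outside $B_R$, so restricting to $\mathcal X = B_R(0)$ is legitimate and the boundary $\Gamma$ plays no role beyond making the $L^2$ norms finite.

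The key steps, in order: (i) derive the subtracted equation and identify the source $g := (f^{\tilde\pi}-f^\pi) - \lambda(\mathcal H^{\tilde\pi}-\mathcal H^\pi)$; (ii) bound $\|g\|_{L^2(\mathcal X)}$ pointwise in $x$ by $\|\tilde\pi(x,\cdot)-\pi(x,\cdot)\|_{L^2(U)}$ times a constant, then integrate. For the cost part, $f^{\tilde\pi}(x)-f^\pi(x) = \int_U f(x,u)\big(\tilde\pi(x,u)-\pi(x,u)\big)\,\mathrm du$, and since $f(x,u) = b(x,u)\cdot\nabla_x v + r(x,u)$ is bounded on $\mathcal X\times U$ (using (A2), the compactness of $U$, and the a priori gradient bound), Cauchy–Schwarz in $u$ gives $|f^{\tilde\pi}(x)-f^\pi(x)| \le C\,\|\tilde\pi(x,\cdot)-\pi(x,\cdot)\|_{L^2(U)}$. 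For the entropy part, write
\[
\mathcal H^{\tilde\pi}-\mathcal H^{\pi}
= \int_U \big(\tilde\pi\log\tilde\pi - \pi\log\pi\big)\,\mathrm du;
\]
since $\pi,\tilde\pi\in[m,M]$, the map $s\mapsto s\log s$ is Lipschitz on $[m,M]$ with constant $1+\log M$ (or $|\log m|$, whichever dominates), so $|\mathcal H^{\tilde\pi}(x)-\mathcal H^\pi(x)| \le (1+|\log m|+|\log M|)\,\|\tilde\pi(x,\cdot)-\pi(x,\cdot)\|_{L^1(U)} \le C\,|U|^{1/2}\,\|\tilde\pi(x,\cdot)-\pi(x,\cdot)\|_{L^2(U)}$. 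Combining, $\|g\|_{L^2(\mathcal X)} \le C\,\|\tilde\pi-\pi\|_{L^2(\mathcal X\times U)}$ by Fubini. (iii) Apply Lemma~\ref{lem:energy_estimate} — with $\rho > \tfrac12 B$ and $\Sigma \succeq \tfrac1{C_0}I_d$ from Assumptions~\ref{ass:main}–\ref{ass:decay} — to conclude $\|w\|_{L^2(\mathcal X)} \le \frac{1}{\rho-\frac12 B}\|g\|_{L^2(\mathcal X)}$ and $\|\nabla_x w\|_{L^2(\mathcal X)} \le \sqrt{\frac{C_0}{\rho-\frac12 B}}\|g\|_{L^2(\mathcal X)}$. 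Folding the constant $C$ from step (ii) into these bounds yields exactly the claimed $\tilde C_\rho$.

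The main obstacle — and the step requiring the most care — is the boundedness of $f(x,u)$ on $\mathcal X\times U$, since $f$ contains $b(x,u)\cdot\nabla_x v(x)$: this needs an a priori $L^\infty$ bound on $\nabla_x v$ (equivalently $\nabla_x v^\pi$), which does not come for free from the $L^2$ energy estimate alone. I would obtain it from interior Schauder / $W^{2,p}$ estimates for the linear elliptic equation defining $v^\pi$, using (A1)–(A2), the compact support of the coefficients, and the uniform bounds $\pi\in[m,M]$ to control the right-hand side; this is exactly the kind of regularity already asserted (the "$v\in C_b^2$" conclusion) in Lemma~\ref{lem:energy_estimate} and Theorem~\ref{thm:pi_global}. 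A secondary technical point is ensuring the energy estimate is applied on the \emph{whole space} extension (the coefficients vanish outside $B_R$, and $v^\pi$, $v^{\tilde\pi}$ are the global $C_b^2$ solutions), so that no boundary terms from $\Gamma$ intrude when integrating by parts; alternatively one works directly on $B_R$ with the understanding that $w$ and its derivatives vanish to the relevant order near $\Gamma$ because the forcing does. Everything else is a routine Cauchy–Schwarz-and-Fubini computation.
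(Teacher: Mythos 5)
Your proposal is correct and follows essentially the same route as the paper: subtract the two PDEs, bound the resulting source term by $\|\tilde\pi-\pi\|_{L^2(\mathcal{X}\times U)}$ via Cauchy--Schwarz in $u$ and the Lipschitz continuity of $s\mapsto s\log s$ on $[m,M]$, then conclude with the $L^2$ energy estimate (the paper redoes it by hand with Young's inequality rather than citing Lemma~\ref{lem:energy_estimate}, but the computation is identical). The issue you flagged is real: the paper's proof does take $b^{\tilde\pi}$ as the drift in the second equation and absorbs the cross term $(b^{\tilde\pi}-b^{\pi})\cdot\nabla_x v^{\pi}$ exactly as you anticipate, using an a priori regularity bound on $v^{\pi}$ from elliptic theory.
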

The proof of the lemma is presented in Appendix~\ref{lem:lip:proof}. We continue to give a proof of Theorem~\ref{thm:expanded}.
\begin{proof}[Proof of Theorem~\ref{thm:expanded}]
We begin by decomposing the total error via triangle inequality:
\[
\begin{split}
&\|\tilde v^n - V\|_{L^2(\mathcal{X})}\\
&\le
\underbrace{\|\tilde v^n - \hat v^n\|_{L^2(\mathcal{X})}}_{\text{(I)}}
+
\underbrace{\|\hat v^n - v^n\|_{L^2(\mathcal{X})}}_{\text{(II)}}
+
\underbrace{\|v^n - V\|_{L^2(\mathcal{X})}}_{\text{(III)}},
\end{split}
\]
where $\hat v^n := T[\tilde \pi^n]$ is the exact PDE solution under the learned policy.

By Lemma~\ref{lem:energy_estimate}, we have
\[
\|\tilde v^n - \hat v^n\|_{L^2(\mathcal{X})}\leq C_\rho \|q_n\|_{L^2(\mathcal{X})}.
\]

To estimate (II), we invoke Lemma~\ref{lem:local-stability} to deduce that 
\[
\begin{split}
\|\hat v^n - v^n\|_{L^2(\mathcal{X})}
&= \|T[\tilde \pi^n] - T[\pi^n]\|_{L^2(\mathcal{X})}\\
&\le \tilde C_{\rho} \|\tilde \pi^n - \pi^n\|_{L^2(\mathcal{X} \times U)}.
\end{split}
\]

We now split the policy gap:
\begin{equation}\label{eq:policy_gap}
\|\tilde \pi^n - \pi^n\|_{L^2(U)} \leq 
\underbrace{\|\tilde \pi^n - \hat \pi^n\|_{L^2(U)}}_{=r_n(x)}
+
\underbrace{\|\hat \pi^n - \pi^n\|_{L^2(U)}}_{\text{(analytic softmax mismatch)}},
\end{equation}
where $\hat \pi^n := \pi[\tilde v^{n-1}]$, $\pi^n := \pi[v^{n-1}]$. 

The softmax map \(\pi[v] = \Phi(\nabla_x v)\) is Lipschitz in gradient as demonstrated in Proposition~\ref{lem:softmaxLip}:
    \[
    \|\tilde \pi - \pi\|_{L^2(U)} \le L_\Phi |\nabla_x \tilde v^{n-1} - \nabla_x v^{n-1}|.
    \]
Therefore, we have that
\[
\begin{split}
\|\tilde \pi^n - \pi^n\|_{L^2(\mathcal{X}\times U)}
&= \|\Phi(\nabla_x \tilde v^{n-1}) - \Phi(\nabla_x v^{n-1})\|_{L^2(\mathcal{X}\times U)}\\
&\le L_\Phi \|\nabla_x \tilde v^{n-1} - \nabla_x v^{n-1}\|_{L^2(\mathcal{X})}.
\end{split}
\]

Again split:
\[
\begin{split}
&\|\nabla_x \tilde v^{n-1} - \nabla_x v^{n-1}\|_{L^2(\mathcal{X})}\\
&\le \|\nabla_x \tilde v^{n-1} - \nabla_x \hat v^{n-1}\|_{L^2(\mathcal{X})}
+ \|\nabla_x \hat v^{n-1} - \nabla_x v^{n-1}\|_{L^2(\mathcal{X})}.
\end{split}
\]

Apply the residual bound and Lemma~\ref{lem:energy_estimate}, 
\[
\|\nabla_x \tilde v^{n-1} - \nabla_x \hat v^{n-1}\|_{L^2(\mathcal{X})} \le C_\rho \|q_{n-1}\|_{L^2(\mathcal{X})},
\]
and by Lemma~\ref{lem:local-stability},
\[
\|\nabla_x \hat v^{n-1} - \nabla_x v^{n-1}\|_{L^2(\mathcal{X})} \le \tilde C_{\rho} \|\tilde \pi^{n-1} - \pi^{n-1}\|_{L^2(\mathcal{X}\times U)}.
\]

By iteratively applying~\eqref{eq:policy_gap} with 
\[
k_n:= \|\tilde \pi^n - \pi^n\|_{L^2(\mathcal{X}\times U)},
\]
we have that
\[
k_n \leq r+L_\Phi C_\rho q + \underbrace{L_\Phi \tilde C_\rho}_{=\gamma} k_{n-1},
\]
which leads to
\[
k_n \leq  k_0 \gamma^n+\underbrace{\sum_{i=0}^{n-1} ( r+L_\Phi C_\rho q)\gamma^{i}.}_{\leq \frac{r+L_\Phi C_\rho q}{1-\gamma}}
\]

Therefore, 
\[
\begin{split}
\|\hat v^{n}-v^{n}\|_{L^2(\mathcal X)} &\le \tilde C_{\rho} (r+ L_\Phi(C_\rho q + k_{n-1}) )\\
&\leq \tilde C_\rho( r+ L_\Phi C_\rho q+ L_\Phi k_{n-1})\\
&\leq C(r+q).
\end{split}
\]

Putting altogether and recalling Theorem~\ref{thm:pi_global},
\[
\|\tilde v^{n}-V\|_{L^2(\mathcal X)}
\le
C(r+q)+C_{\mathcal X} \kappa^n.
\]
\end{proof}

An important implication of this theorem is that the total approximation error does not accumulate over policy iterations. Instead, it remains uniformly bounded in terms of the residual and policy approximation errors.

In the next section, we validate these theoretical findings through numerical experiments.

\section{Experiments}
We evaluate the proposed PINN-based soft policy iteration (PINN-SPI) framework on a suite of entropy-regularized stochastic control problems, ranging from low-dimensional nonlinear systems to high-dimensional linear-quadratic regulators (LQR). Our goals are to  (i) demonstrate scalability to high-dimensional settings, (ii) demonstrate the monotonicity property~\cite[Corollary 5.1]{tran2025policy}. All implementation details, hyperparameter configurations, and reproducibility materials are available in our public repository.\footnote{\url{https://github.com/tomatofromsky/pinn-spi-aaai2026}}

\subsection{Linear-quadratic regulator (LQR) with compact action space}
We consider entropy-regularized linear-quadratic regulator (LQR) problems in 5, 10 and 20 dimensions with compact action constraints. The system dynamics are
\[
\mathrm{d} X_t = (A X_t + B u_t)\, \mathrm{d}t + \sigma\, \mathrm{d}W_t,
\]
where the reward is given by
\[
L(x, u) = - x^\top Q x - u^\top R u,
\]
and our control set $U$ is defined as
\[
u \in U := \{ u \in \mathbb{R}^m \mid \|u\|_\infty \le \textbf{u} \}.
\]

Unlike classical LQR problems, where Riccati equations yield closed-form solutions, the compact control constraint requires direct numerical solution of the HJB equation. The value function, however, remains close to quadratic, making it a suitable benchmark for evaluating approximation quality and convergence.

We apply PINN-SPI and compare its performance with the model-free Soft Actor-Critic (SAC)~\cite{haarnoja2018soft} and Proximal Policy Optimization (PPO)~\cite{schulman2017proximal} algorithm. Both methods are initialized with the same linear controller derived from the unconstrained problem. Our approach uses residual minimization with randomly sampled collocation points and softmax-based policy updates.

\paragraph{Setup.}Experiments are conducted with randomly generated matrices $(A,B,Q,R)$, and set isotropic noise $\sigma = 0.1 I_d$ where $I_d$ denotes $d\times d$ identity matrix. We take $d = 5,10$ and 20, and set $\textbf{u}=10$. Evaluation metrics include average discounted reward per iteration.

\paragraph{Results.} Figure~\ref{fig:lqr:comp} compares PINN-SPI against SAC and PPO in 5D and 10D LQR settings. PINN-SPI consistently achieves higher reward and smoother convergence compared to other methods.

Figure~\ref{fig:lqr:mono} further illustrates the convergence behavior of PINN-SPI. The evaluation reward increases monotonically over training time, confirming the theoretical stability of policy iteration. The 10D case in particular highlights the scalability of our mesh-free implementation. An additional experiment for 20D case is provided in Appendix~\ref{app:exp}.

\begin{figure}[htbp]
  \centering
  \begin{subfigure}[b]{0.4\textwidth}
    \centering
    \includegraphics[width=\textwidth]{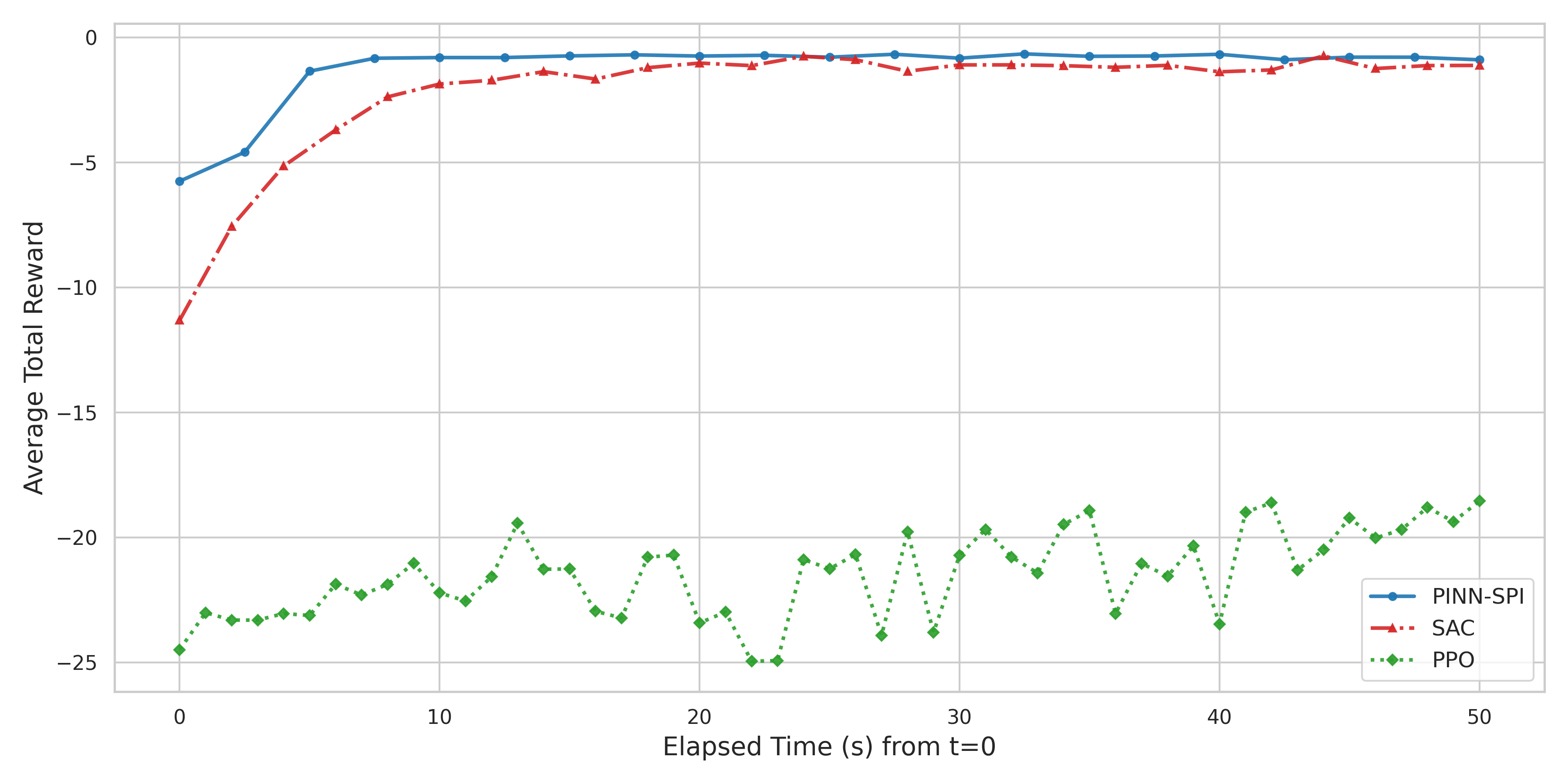}
    \caption{5D stochastic LQR with compact control set and entropy regularization.}
    \label{fig:img1-1}
  \end{subfigure}
  \hfill
  \begin{subfigure}[b]{0.4\textwidth}
    \centering
    \includegraphics[width=\textwidth]{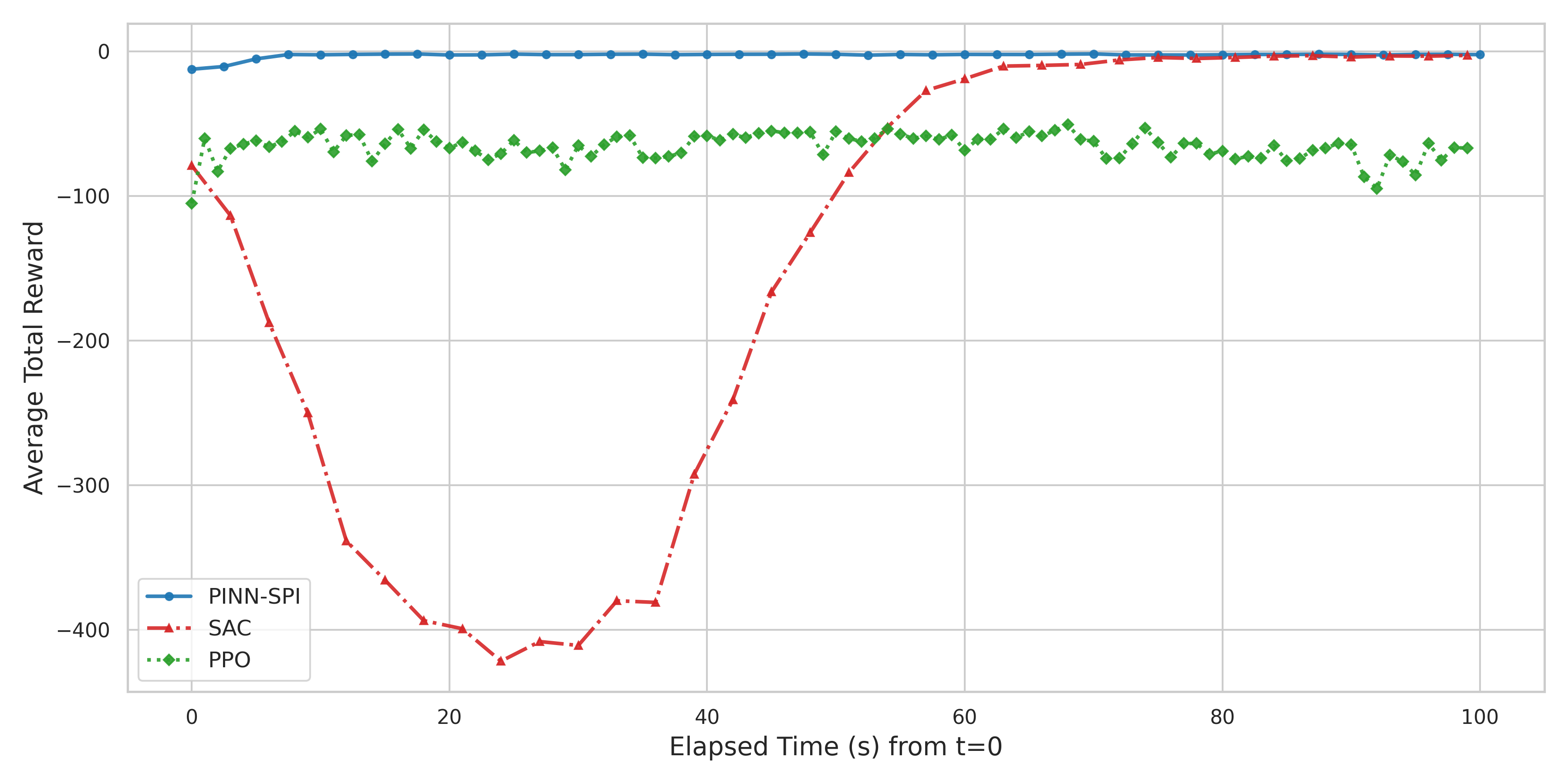}
    \caption{10D stochastic LQR with compact control set and entropy regularization.}
    \label{fig:img1-2}
  \end{subfigure}
  \caption{Comparison of PINN-SPI, SAC, and PPO on 5D and 10D stochastic LQR problems with compact action constraints.}
  \label{fig:lqr:comp}
\end{figure}

\begin{figure}[htbp]
  \centering
  \begin{subfigure}{0.4\textwidth}
    \centering
    \includegraphics[width=\textwidth]{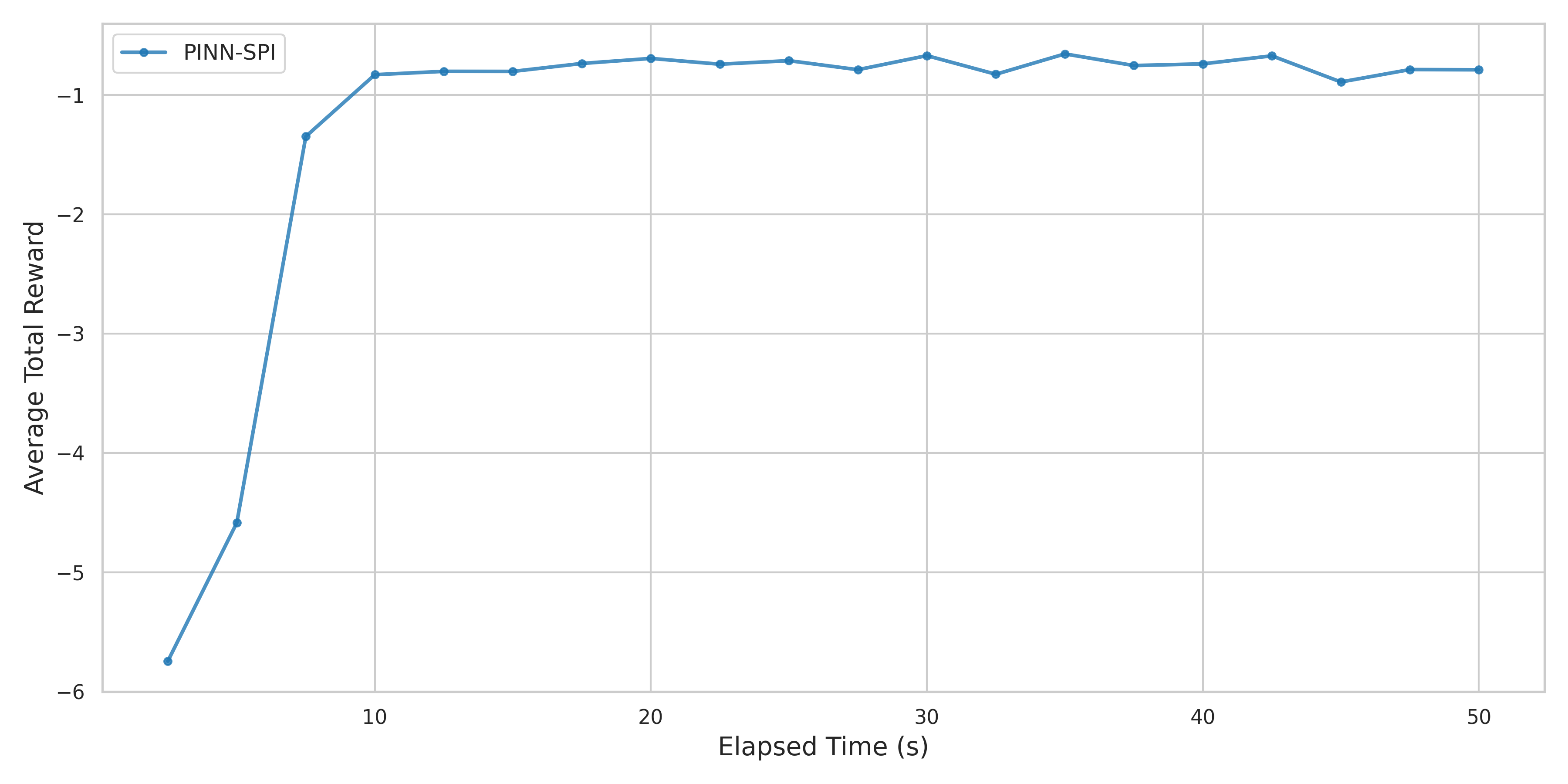}
    \caption{5D stochastic LQR problem.}
    \label{fig:img2-1}
  \end{subfigure}
  \hfill
  \begin{subfigure}{0.4\textwidth}
    \centering
    \includegraphics[width=\textwidth]{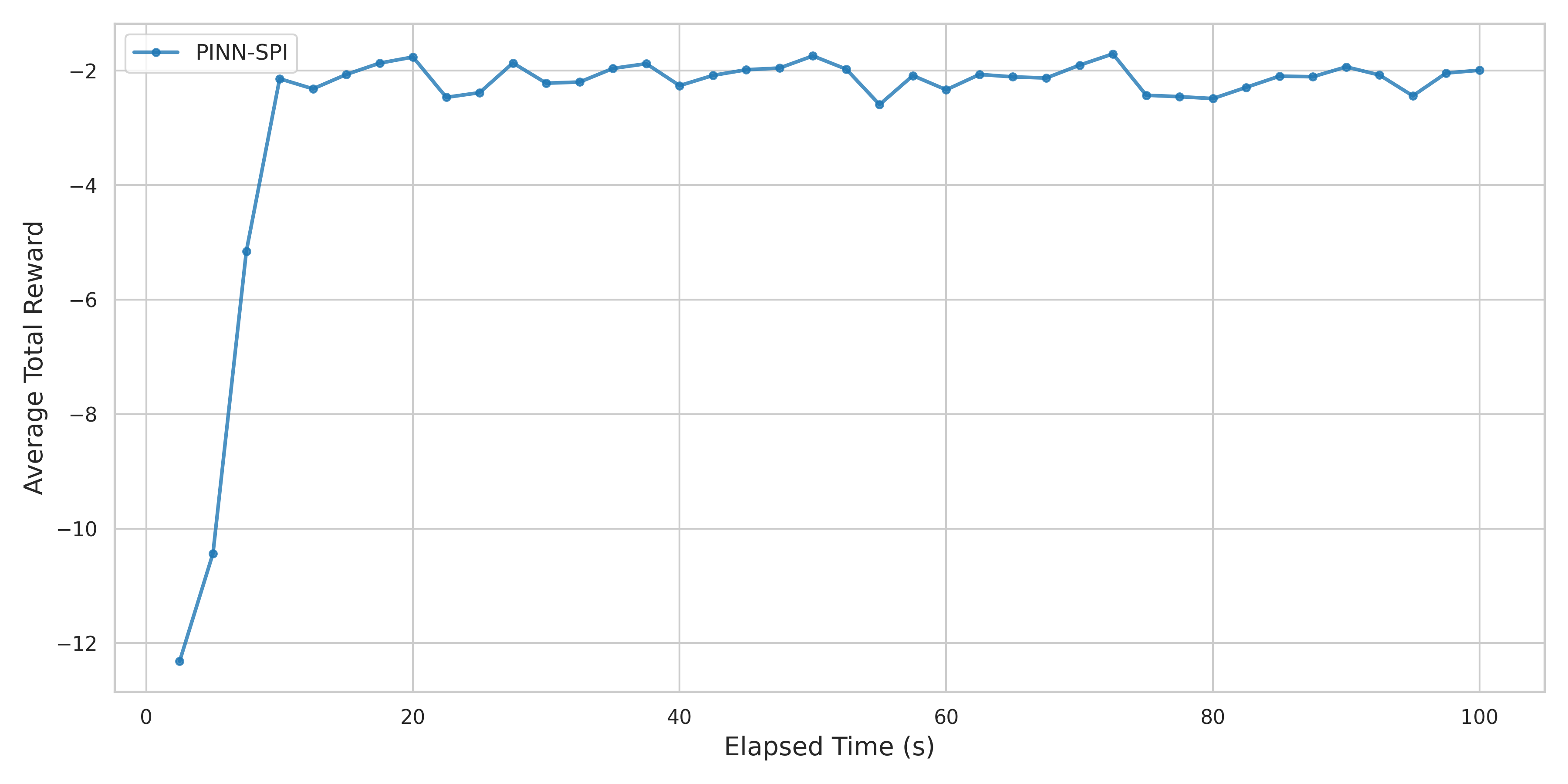}
    \caption{10D stochastic LQR problem.}
    \label{fig:img2-2}
  \end{subfigure}
\caption{Evaluation reward over training time for PINN-SPI on LQR tasks. The average total reward increases monotonically as policy iteration proceeds.}
  \label{fig:lqr:mono}
\end{figure}

\subsection{Nonlinear benchmarks with stochastic dynamics}
To demonstrate performance in more realistic and nonlinear settings, we evaluate our method on two standard control benchmarks: the stochastic inverted pendulum and cartpole. Each system is modeled as a stochastic control-affine system with additive Brownian noise. We adopt a standard stochasticized version of the cartpole and pendulum dynamics supported by OpenAI GYM~\cite{brockman2016openai} with additive noise on state variables.

\paragraph{Setup.} For each environment, we define entropy-regularized cost functionals and apply PINN-SPI with a fixed noise level $\sigma = 0.1  I_d$. We use neural networks for both value approximation and policy extraction, trained via policy iteration with residual loss minimization. 

\paragraph{Result.} Figure~\ref{fig:gym:comp} shows the performance over elapsed time. PINN-SPI achieves faster stabilization and higher reward than SAC and PPO, while maintaining constraint satisfaction. %Especially under high noise, SAC tends to exhibit oscillatory behavior due to reward shaping issues, whereas PINN-SPI produces smoother trajectories guided by the physics-informed value landscape.

\begin{figure}[htbp]
  \centering
  \begin{subfigure}[b]{0.4\textwidth}
    \centering
    \includegraphics[width=\textwidth]{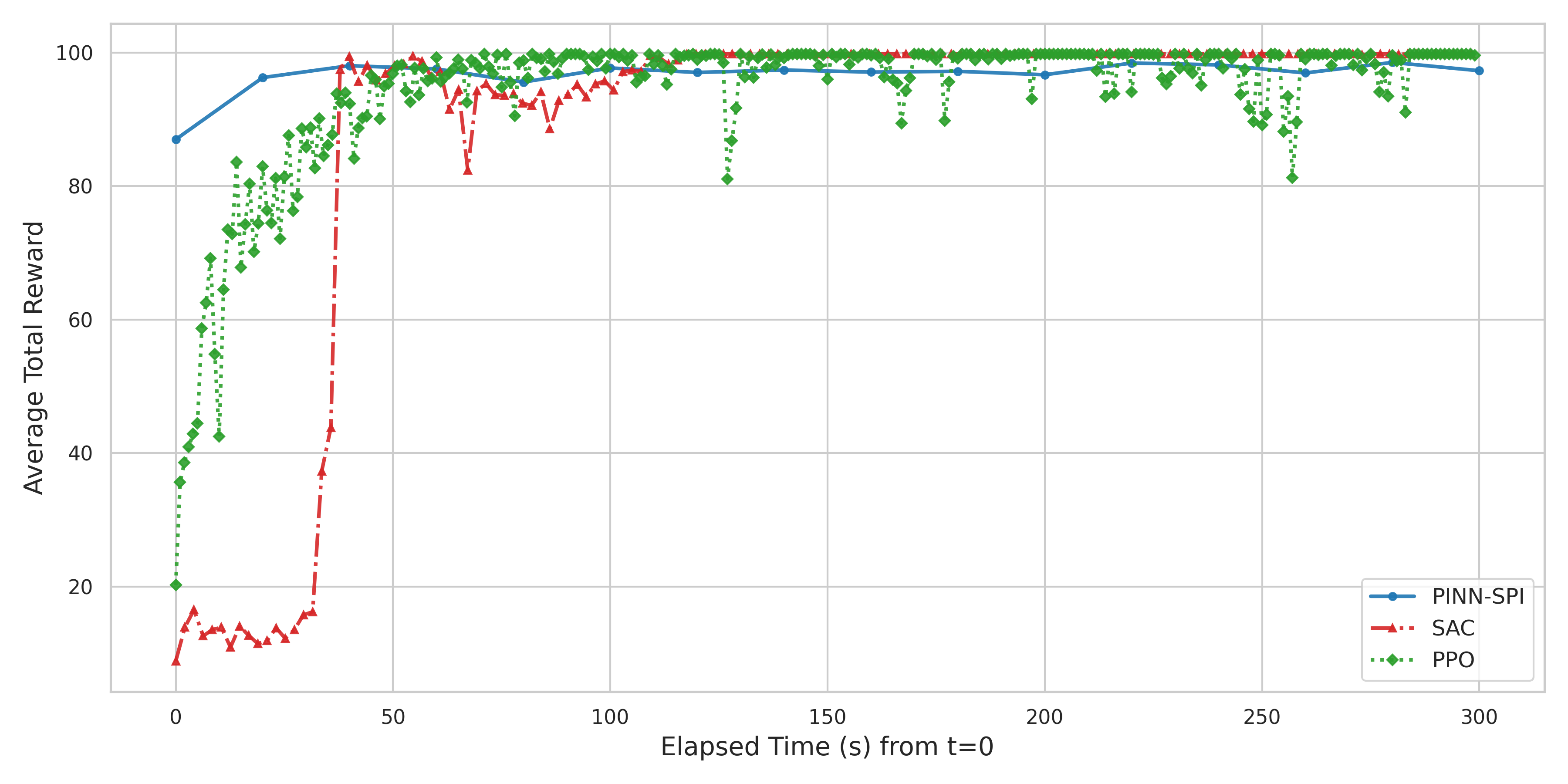}
    \caption{Stochastic cartpole problem.}
    \label{fig:img3-1}
  \end{subfigure}
  \hfill
  \begin{subfigure}[b]{0.4\textwidth}
    \centering
    \includegraphics[width=\textwidth]{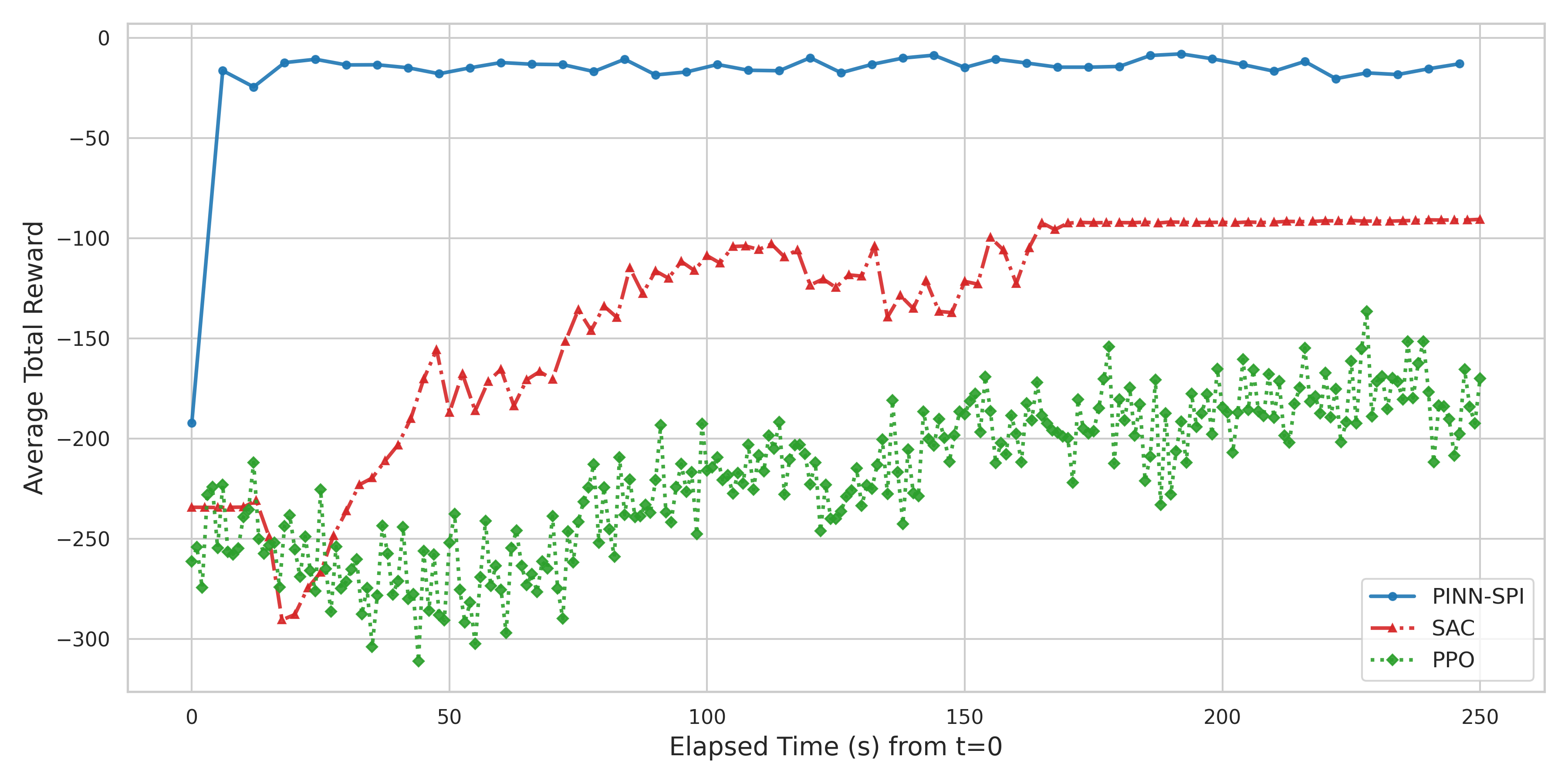}
    \caption{Stochastic pendulum problem.}
    \label{fig:img3-2}
  \end{subfigure}
  \caption{Comparison between PINN-SPI, SAC, and PPO on the cartpole and pendulum problems.}
  \label{fig:gym:comp}
\end{figure}

\section{Discussion}
Our proposed PINN-based soft policy iteration (PINN-SPI) framework provides a structured and scalable method for solving entropy-regularized stochastic control problems. Compared to classical policy iteration, the entropy-regularized formulation enables several theoretical and practical advantages, particularly when combined with mesh-free residual minimization. Our method offers several key advantages. 

\paragraph{Soft updates enable stability.}
In standard policy iteration (e.g., Howard’s method~\cite{howard1960dynamic,KerimkulovSiskaSzpruch2020}), the policy improvement step requires a pointwise maximization, which can result in discontinuous or unstable policies in nonlinear or high-dimensional systems. By contrast, the softmax-based update in exploratory control is differentiable and admits explicit $L^2$-Lipschitz continuity with respect to the value function gradient. This structure enables more stable policy improvement and facilitates gradient-based learning.

\paragraph{Systematic error decomposition.}
The policy evaluation step in our method solves a linear PDE with frozen coefficients. This linearity allows a rigorous $L^2$ error decomposition into three sources: iteration error, residual error, and policy approximation error. Theorem~\ref{thm:expanded} establishes that total approximation error remains uniformly bounded and does not accumulate across iterations, which is a critical stability guarantee for learning-based control methods.

\paragraph{Scalability to nonlinear and high-dimensional settings.}
Our framework applies to general nonlinear stochastic systems with compact control constraints. By using physics-informed neural networks (PINNs) and mesh-free residual minimization, we avoid spatial discretization and enable tractable approximation even in high dimensions. 

We now address some limitations and future directions. 
\paragraph{Policy differentiability assumption.}
Our theoretical results rely on the assumption that policies are differentiable functions of the value gradient (e.g., via softmax). This excludes problems with non-smooth or bang-bang optimal policies, or discrete action spaces, where such structure may not exist.

\begin{comment}
\paragraph{Approximation bottlenecks.}
Although the global error does not accumulate theoretically, in practice, we observe that imperfect policy approximation can cause convergence plateaus. This suggests a need for stronger policy architectures, projection methods, or trust-region style updates to improve accuracy.
\end{comment}
\paragraph{Model assumptions.}
Our framework assumes full knowledge of system dynamics (drift and diffusion). Extension to model-uncertain settings or learning dynamics jointly with control remains a promising but challenging direction.

\paragraph{Summary.}
The proposed PINN-SPI method bridges classical policy iteration theory and modern neural approximation by leveraging the analytic structure of entropy-regularized control. The result is a mesh-free, provably convergent algorithm that performs well across a broad range of benchmarks. Future work will aim to relax model assumptions, improve policy training stability, and extend to partially observed or data-driven settings.

\section*{Acknowledgements}
This work was supported by Seoul National University of Science and Technology. Yeoneung Kim is supported by the National Research Foundation of Korea (NRF) grant funded by the Korea government (MSIT) (RS-2023-00219980, RS-2023-00211503). Namkyeong Cho was supported by the Gachon University research fund of 2025 (GCU-202502800001). The authors would like to thank Professor Hung Vinh Tran (University of Wisconsin–Madison) for his insightful suggestions and valuable guidance in developing and refining the ideas of this work.

\bibliography{aaai2026}

%hide appendix
\newpage

\appendix 

\section{Proof of Proposition~\ref{prop:bound}}\label{app:proof:prop}

We next verify that the softmax policy $\pi^n(x,u)$ is uniformly bounded above and below.
By definition,
\[
\pi^n(x,u) = \frac{\exp\left( \frac{1}{\lambda} f(x,u,\nabla_x v^{n-1}(x)) \right)}
                 {\int_U \exp\left( \frac{1}{\lambda} f(x,u',\nabla_x v^{n-1}(x)) \right) \,\mathrm{d}u'},
\]
where $f(x,u,p) := b(x,u)\cdot p + r(x,u).$
Since \( b(x,u), r(x,u) \) are bounded and the value functions $\| v_n \|_{C^2(\R^d)}$ is bounded~\cite{tran2025policy,ma2024convergence}, we obtain a uniform bound on the numerator:
\[
\begin{split}
\exp\left( \tfrac{1}{\lambda} f(x,u,\nabla_x v^{n-1}(x)) \right)\le \exp\left( \tfrac{1}{\lambda} C_{\text{num}} \right).
\end{split}
\]
Meanwhile, the denominator is bounded below by
\[
\begin{split}
&\int_U \exp\left( \tfrac{1}{\lambda} f(x,u',\nabla_x v^{n-1}(x)) \right) ,\ \mathrm{d}u'
\\
&\ge |U| \cdot \exp\left( \tfrac{1}{\lambda} \inf_{u' \in U} f(x,u',\nabla_x v^{n-1}(x)) \right)
\ge C_{\text{den}} > 0.
\end{split}
\]
Hence, the softmax satisfies
\[
0 < \pi^n(x,u) \le \frac{ \exp( \frac{1}{\lambda} C_{\text{num}} ) }{ C_{\text{den}} } =: M < \infty.
\]
By a similar argument, $\pi^n$ is bounded below uniformly. Therefore, for some $m>0$, we have that
\[
\pi^n \in [m, M].
\]

\section{Proof of Lemma~\ref{lem:softmaxLip}}\label{app:lem:softmaxlip}
We first set 
\[
Z(p):=\exp \Bigl[\tfrac1\lambda\bigl(b(x,u) \cdot p+r(x,u)\bigr)\Bigr],
\]
and
\[
D(p):=\int_{U}Z(p,u') \,\mathrm{d}u',
\]
so that $\Phi(p)=Z(p)/D(p)$.

Differentiating $\Phi$ with respect to
$p_{j}$ ($j=1,\dots,d$), we obtain
\[
\partial_{p_{j}}\Phi
=\frac{1}{\lambda D(p)}
       Z(p,u)   \Bigl(b_{j}(x,u)-\bar b_{j}(p)\Bigr),
\]
where
$\bar b_{j}(p):=\frac{\int_{U}b_{j}(x,u')Z(p,u')  \, \mathrm{d}u'}{D(p)}$. Hence, 
\[
\bigl|\partial_{p_{j}}\Phi\bigr|
 =
\Phi \frac{|b_{j}(x,u)-\bar b_{j}(p)|}{\lambda}
\le \frac{2 \sup_{|p|\leq \tilde L} \Phi(p) \|b\|_{L^\infty(\R^d \times U)}}{\lambda}
\]

For arbitrary $p,q\in\R^{d}$ the fundamental theorem of calculus yields
\[
\Phi(p)-\Phi(q)
=\int_{0}^{1}\nabla_{p}\Phi \bigl(q+\theta(p-q)\bigr) (p-q) \,\mathrm{d}\theta .
\]
Taking $L^{2}(U)$norms, and recalling
\begin{comment}
\[
L_{\Phi}
:=\frac{2 \sup_{|p|\leq \tilde L} \Phi(p) \sqrt{d} |U|^{1/2}}{\lambda},
\]
\end{comment}
\[
L_{\phi}:= \frac{2\|b\|_{L^\infty}
  \sup_{|p|\le\tilde L}\Phi(p)
  \sqrt{d}|U|^{1/2}}{\lambda},
\]
we deduce that
\[
\|\Phi(p)-\Phi(q)\|_{L^{2}(U)}
  \le  
\int_{0}^{1}L_{\Phi}   |p-q|   \,\mathrm{d}\theta
=L_{\Phi}   |p-q|,
\]
which is exactly \eqref{eq:Lip-est}.

\section{Proof of Lemma~\ref{lem:local-stability}}\label{lem:lip:proof}
With $\|f\|_{H^1(\mathcal{X})}:=\|f\|_{L^2(\mathcal{X})}+\|\nabla_x f\|_{L^2(\mathcal{X})}$, we note first that $\|v^\pi\|_{H^1(\mathcal{X}))}+\|\tilde v^{\tilde \pi}\|_{H^1(\mathcal{X})}<\infty$ by the elliptic regularity theory~\cite{evans2022partial}. Setting $e:=v^{\tilde\pi}-v^{\pi}$, we have that
\[
\begin{split}
\rho e
 &=b^{\tilde\pi} \cdot\nabla_x e
  +\tfrac12\operatorname{tr}(\Sigma D_{xx}^{2}e)
  +(b^{\tilde\pi}-b^{\pi}) \cdot\nabla_x v^{\pi}
  \\
  &+(f^{\tilde\pi}-f^{\pi})
  -\lambda(\mathcal H^{\tilde\pi}-\mathcal H^{\pi}).
\end{split}
\]
Multiply by $e$ and integrate over $\mathcal{X}$. By Assumption~\ref{ass:decay}.
\[
\begin{split}
 &(\rho-\tfrac12B)\|e\|_{2}^{2}
 +\frac{1}{C_0}\|\nabla_x e\|_{2}^{2}
 \\
 &\le
 (\|b^{\tilde\pi}-b^{\pi}\|_{2}
        \|\nabla_x v^{\pi}\|_{2}
       +\|f^{\tilde\pi}-f^{\pi}\|_{2}
       +\lambda\|\mathcal H^{\tilde\pi}-\mathcal H^{\pi}\|_{2})
        \\
        &\qquad \qquad \qquad \qquad \times \|e\|_{2},
\end{split}
\]
where all norms are over $\mathcal{X}$.  
Estimate the three coefficient differences:
\[
\|b^{\tilde\pi}-b^{\pi}\|_{2}\le C_1
\|\tilde\pi-\pi\|_{L^{2}(\mathcal{X}\times U)},
\]
and
\[
\|f^{\tilde\pi}-f^{\pi}\|_{2}\le C_1
                                \|\tilde\pi-\pi\|_{L^{2}(\mathcal{X}\times U)},
\]
where $C_1$ is from Assumption~\ref{ass:main}.
For the entropy term, we recall Proposition~\ref{prop:bound}, and set $L_H:= \sup_{s\in [m,M]} |1+\log s|$ where $m,M$ are from the proposition to obtain
\[
\|\mathcal{H}^{\pi} - \mathcal{H}^{\tilde \pi}\|_{2}
\le L_H\|\tilde \pi-\pi\|_{L^2 (\mathcal{X} \times U)}.
\]
Combining together, we have
\[
\begin{split}
&(\rho-\tfrac12B)\|e\|_{2}^{2}
 +\frac{1}{2C_0}\|\nabla_x e\|_{2}^{2}\\
 &\qquad \le \underbrace{(C_1C_2+C_1+\lambda C_1)L_H}_{=:C}\|\tilde \pi-\pi\|_{L^2(\mathcal{X} \times U)}\|e\|_2.
\end{split}
\]
Invoking the Young's inequality $ab \leq  \epsilon a^2 +\frac{1}{4\epsilon}b^2$ with $a=\|e\|_2^2$, $b=\|\tilde\pi-\pi\|_{L^(\mathcal X \times U)}$ and $\epsilon=\frac{\rho-\frac{1}{2}B}{2C}$, we deduce that
\[
\begin{split}
&\frac{\rho-\frac{1}{2}B}{2}\|e\|_2^2 +\frac{1}{2C_0}\|\nabla_x e\|_2^2 \\
&\qquad\leq {\frac{C}{2(\rho-\frac{1}{2}B)}} \|\tilde\pi-\pi\|^2_{L^2(\mathcal{X} \times U)}.
\end{split}
\]

Therefore,
\[
\|e\|_2  \leq \frac{\sqrt{C}}{\rho-\frac{1}{2}B} \leq \|\tilde\pi-\pi\|_{L^2(\mathcal{X} \times U)}
\]
and
\[
\|\nabla_x e\|_2 \leq C\sqrt{\frac{ C_0}{\rho-\frac{1}{2}B}} \|\tilde\pi-\pi\|_{L^2(\mathcal{X}\times U)}.
\]
To get the desired result, we choose
\[
\tilde C_\rho:=\max\{\frac{\sqrt{C}}{\rho-\frac{1}{2}B},C\sqrt{\frac{ C_0}{\rho-\frac{1}{2}B}} \}.
\]

\section{Experimental details}

\subsection{Stochastic linear-quadratic regulator.}

For the stochastic LQR problem where the dynamics is given by
\[
\mathrm{d}X_t=(AX_t+Bu_t)\mathrm{d} t + \sigma\mathrm{d}W_t,
\]
we use
% \[
% A = \begin{bmatrix}
% 0.4455 & 0.2996 & 0.4497 & 0.2813 & 0.3114 \\
% 0.4317 & 0.2885 & 0.3214 & 0.2523 & 0.4193 \\
% 0.1975 & 0.4295 & 0.3014 & 0.4225 & 0.2282 \\
% 0.4361 & 0.4716 & 0.3812 & 0.3010 & 0.0037 \\
% 0.2247 & 0.0629 & 0.3350 & 0.0424 & 0.1836 \\
% \end{bmatrix},\]
% [ms] New A
\[
A = \begin{bmatrix}
0.0446 & 0.0300 & 0.4497 & 0.2813 & 0.0311 \\
0.0432 & 0.2885 & 0.0321 & 0.0252 & 0.0419 \\
0.1975 & 0.0430 & 0.0301 & 0.0422 & 0.2282 \\
0.0436 & 0.4716 & 0.3812 & 0.3010 & 0.0372 \\
0.0225 & 0.0629 & 0.0335 & 0.0424 & 0.1836 \\
\end{bmatrix},\]

\[
B = \begin{bmatrix}
0.0681 & 0.0544 & 0.0467 & 0.0152 & 0.0787 \\
0.0970 & 0.0081 & 0.0145 & 0.0034 & 0.0984 \\
0.0358 & 0.0833 & 0.0324 & 0.0839 & 0.0012 \\
0.0116 & 0.0280 & 0.0056 & 0.0092 & 0.0432 \\
0.0047 & 0.0848 & 0.0718 & 0.0977 & 0.0556 \\
\end{bmatrix}
\]
for $d=m=5$, and
\[
\resizebox{\columnwidth}{!}{$
A = \begin{bmatrix}
0.0866 & 0.0191 & 0.1394 & 0.1392 & 0.1480 & 0.0241 & 0.0988 & 0.1435 & 0.1883 & 0.1923 \\
0.1271 & 0.0900 & 0.0924 & 0.0766 & 0.0696 & 0.0466 & 0.1716 & 0.0371 & 0.1097 & 0.0946 \\
0.0595 & 0.1656 & 0.1953 & 0.1353 & 0.1872 & 0.0587 & 0.0830 & 0.0035 & 0.0215 & 0.0740 \\
0.1971 & 0.0808 & 0.1301 & 0.0157 & 0.1908 & 0.1505 & 0.0662 & 0.1334 & 0.1394 & 0.1951 \\
0.0570 & 0.0419 & 0.0470 & 0.0916 & 0.1094 & 0.0640 & 0.0159 & 0.1687 & 0.1224 & 0.0294 \\
0.1137 & 0.1033 & 0.0379 & 0.0881 & 0.1224 & 0.0139 & 0.0060 & 0.1857 & 0.0732 & 0.0989 \\
0.1271 & 0.0414 & 0.1232 & 0.1896 & 0.1457 & 0.0997 & 0.1830 & 0.1309 & 0.0673 & 0.0855 \\
0.0827 & 0.1076 & 0.1498 & 0.1164 & 0.0192 & 0.1888 & 0.1357 & 0.1352 & 0.1086 & 0.1959 \\
0.1489 & 0.0223 & 0.0018 & 0.0002 & 0.1631 & 0.1272 & 0.0282 & 0.0075 & 0.0351 & 0.0478 \\
0.0791 & 0.1215 & 0.0219 & 0.1653 & 0.0635 & 0.0230 & 0.1943 & 0.0373 & 0.0253 & 0.1129
\end{bmatrix},$}\]
\[
\resizebox{\columnwidth}{!}{$
B = \begin{bmatrix}
0.0728 & 0.0209 & 0.0336 & 0.0269 & 0.0197 & 0.0844 & 0.0358 & 0.0483 & 0.0033 & 0.0559 \\
0.0459 & 0.0812 & 0.0411 & 0.0338 & 0.0581 & 0.0689 & 0.0084 & 0.0015 & 0.0198 & 0.0053 \\
0.0136 & 0.0803 & 0.0666 & 0.0570 & 0.0277 & 0.0721 & 0.0613 & 0.0106 & 0.0115 & 0.0699 \\
0.0848 & 0.0815 & 0.0915 & 0.0766 & 0.0683 & 0.0997 & 0.0389 & 0.0485 & 0.0630 & 0.0102 \\
0.0858 & 0.0205 & 0.0769 & 0.0968 & 0.0722 & 0.0004 & 0.0201 & 0.0990 & 0.0836 & 0.0750 \\
0.0168 & 0.0033 & 0.0286 & 0.0740 & 0.0314 & 0.0238 & 0.0183 & 0.0277 & 0.0889 & 0.0123 \\
0.0795 & 0.0445 & 0.0037 & 0.0776 & 0.0038 & 0.0103 & 0.0183 & 0.0542 & 0.0722 & 0.0544 \\
0.0399 & 0.0139 & 0.0071 & 0.0227 & 0.0556 & 0.0885 & 0.0062 & 0.0271 & 0.0382 & 0.0641 \\
0.0142 & 0.0063 & 0.0456 & 0.0536 & 0.0993 & 0.0206 & 0.0264 & 0.0463 & 0.0695 & 0.0907 \\
0.0301 & 0.0514 & 0.0583 & 0.0007 & 0.0900 & 0.0426 & 0.0385 & 0.0077 & 0.0110 & 0.0930
\end{bmatrix}
$}
\]
for $d=m=10$. The reward function was set to
\[
L(x, u) = -x^\top Q x - u^\top R u,
\]
with compact action set $U = \{u \in \R^m : \|u\|_\infty \leq \bar u\}$.
We set $Q = 5 I_d$, $R = I_m$, $\bar u = 10$, and use isotropic noise
$\sigma = 0.1 I_d$. The entropy weight is fixed to $\lambda = 0.1$.
The total return is computed as the mean over 30 trajectories with
time step $\Delta t = 0.01$, horizon $T = 2$, and discount rate $\rho = 1$.

%For PINN-SPI, a total of 1,000 iterations were performed, with each iteration consisting of 10 steps of policy evaluation and 10 steps of policy improvement. At each iteration, 100 collocation points were used to both value and policy network. Both the value network $v$ and policy network $\pi$ are trained using the Adam optimizer, with learning rates $3 \times 10^{-4}$ and $3 \times 10^{-5}$, respectively. 

\paragraph{PINN–SPI.}
For all dimensions $d \in \{5,10,20\}$, we use the same network
architecture and hyperparameters.
The \emph{value network} $v(x;\theta)$ is an MLP with
hidden width $100$ and depth $2$, SiLU activations, and residual
connections between hidden layers. It is optimized by Adam with
learning rate $3\times 10^{-4}$.
The \emph{policy network} $\pi(x,u;\omega)$ is an MLP with hidden
width $100$ and depth $4$, also using SiLU and residual connections,
trained by Adam with learning rate $3\times 10^{-5}$.
At each policy-iteration step we draw $100$ state collocation points
and, for every state, approximate the entropy terms using
$100$ action samples on the compact control set.
We perform $1000$ outer policy-iteration steps, and in each step
run $10$ epochs of value-network training and $10$ epochs of
policy-network training.

%For SAC, the training is performed over 2000 iterations using the Adam optimizer (learning rate $1 \times 10^{-5}$), with a batch size of 256, a Polyak averaging factor $\tau = 0.005$, and 500 warm-up steps.

\paragraph{SAC.}
For SAC, both the actor and critic are MLPs with hidden width
$256$, depth $3$, ReLU activations, and output layers adapted to
the action space. All networks are trained with Adam with learning
rate $1\times 10^{-5}$. We use a replay buffer with mini-batch size
$256$, Polyak averaging coefficient $\tau = 0.005$, and entropy
regularization coefficient $0.2$. Training is performed over
$2000$ gradient-update iterations, with $500$ warm-up steps before
starting policy updates.

\paragraph{PPO.}
For PPO, the actor and critic share the same MLP architecture:
hidden width $256$, depth $3$, ReLU activations, trained with Adam.
We use clipping parameter $0.2$, generalized advantage estimation
$\lambda_{\mathrm{GAE}} = 0.95$, value loss coefficient $0.5$,
entropy coefficient $0.01$, and gradient clipping with
$\texttt{max\_grad\_norm} = 1.0$.
For the $5$D LQR problem, we set the actor and critic learning
rates to $3\times 10^{-5}$, use $4096$ rollout steps per update,
$10$ PPO epochs, and mini-batch size $256$.
For the $10$D problem, the actor and critic learning rates are
$2\times 10^{-5}$ and $6\times 10^{-5}$, respectively, with
$1024$ rollout steps, $6$ epochs, and mini-batch size $512$.

\subsection{Stochastic Cartpole}

We consider the stochastic cartpole environment with discount
factor $\rho = 0.5$ and entropy temperature $\lambda = 0.1$.
The total reward is computed as the average over $10$ trajectories.

\paragraph{PINN–SPI.}
Since the action space is discrete, we do not parameterize a
separate policy network. Instead, we apply the softmax update
in~(4) and act greedily by selecting the action with the highest
reward. The value function $v(x;\theta)$ is represented
by an MLP with hidden width $100$, depth $2$, SiLU activations,
and residual connections, trained by Adam with learning rate
$3\times 10^{-4}$.
PINN–SPI is run for $30$ outer iterations; in each iteration
we perform $3000$ gradient steps using $10000$ state collocation
points. For each state we evaluate all available actions, which is
equivalent to using one action sample per state.

\paragraph{SAC.}
The SAC actor and critic are MLPs with hidden width $256$,
depth $3$, ReLU activations, trained with Adam with learning rate
$3\times 10^{-4}$. We use mini-batch size $256$, Polyak update
coefficient $\tau = 0.005$, entropy coefficient $0.2$, and
$300$ warm-up steps. Training is run for $200$ iterations.

\paragraph{PPO.}
The PPO actor and critic are again $256$-wide, $3$-layer MLPs with
ReLU activations and trained with Adam optimizer with learning rate $10^{-4}$.
We use $1024$ rollout steps per update, mini-batch size $256$,
$10$ PPO epochs, clipping parameter $0.2$, GAE parameter
$\lambda_{\mathrm{GAE}} = 0.95$, value loss coefficient $0.5$,
entropy coefficient $0.01$, and maximum gradient norm $1.0$.

\subsection*{Stochastic Pendulum}

For the stochastic pendulum, we use discount factor $\rho = 0.5$
and report the average total reward over $10$ trajectories.

\paragraph{PINN–SPI.}
The value network $v(x;\theta)$ is an MLP with hidden width $100$
and depth $2$, using SiLU activations and residual connections,
trained by Adam with learning rate $3\times 10^{-4}$.
The policy network $\pi(x,u;\omega)$ has hidden width $100$ and
depth $4$, also with SiLU and residual connections, trained with
Adam with learning rate $3\times 10^{-5}$.
PINN–SPI is executed for $12$ outer iterations; in each iteration
we perform $1000$ gradient steps for both value and policy
networks. We use $100$ state collocation points for the value
network and $1000$ state--action collocation points for the policy
network, corresponding to $1000$ action samples per state on the
continuous action domain. The entropy weight is fixed to
$\lambda = 0.1$.

\paragraph{SAC.}
The SAC actor and critic are $256$-wide, 3-layer MLPs with ReLU
activations, trained with Adam with learning rate $3\times 10^{-4}$.
We use a mini-batch size of $256$, entropy coefficient $0.2$,
Polyak coefficient $\tau = 0.005$, and run $20\,000$ gradient
iterations.

\paragraph{PPO.}
For PPO, the actor and critic are again $256$-wide, 3-layer MLPs
with ReLU activations, trained with Adam with learning rate
$10^{-4}$ for both networks. We use $1024$ rollout steps, mini-batch
size $256$, $10$ PPO epochs per update, clipping parameter $0.2$,
GAE parameter $\lambda_{\mathrm{GAE}} = 0.95$, value loss coefficient
$0.5$, entropy coefficient $0.01$, and maximum gradient norm $1.0$.

%\paragraph{Model architecture.} Unless otherwise stated, all neural networks are standard MLPs. For PINN–SPI, the value network uses hidden width $100$ and depth $2$, while the policy network uses hidden width $100$ and depth $4$; both employ SiLU activations and residual connections between hidden layers. For SAC and PPO, the actor and critic networks are MLPs with hidden width $256$ and depth $3$ with ReLU activations. Output layers are adapted to the corresponding action spaces (discrete Cartpole vs.\ continuous LQR/Pendulum).

\section{Additional experimental result}\label{app:exp}
%Similar to the 5D and 10D LQR cases, we observe an increasing trend in the reward as training progresses, consistent with the theoretical results. We also confirm scalability of our algorithm with 20D stochastic LQR problem. Similar to 5D and 10D case, we use random samples between 0 and 0.5 to generate $A$ and $B$. We use the same experimental setup for PINN-SPI and SAC but implement PPO with slightly different setup as described below: For the $20$D problem, both learning rates are $3\times 10^{-5}$, with $512$ rollout steps, $4$ epochs, and mini-batch size $256$.

\subsection{20D stochastic LQR}
Consistent with the 5D and 10D LQR settings, the 20D stochastic LQR experiment also exhibits a clear upward trend in the cumulative reward as training progresses, in agreement with our theoretical convergence results. This demonstrates the scalability of our proposed approach to higher-dimensional stochastic control problems.

For the 20D system, the matrices $A$ and $B$ are generated by sampling each entry independently from the uniform distribution on $[0, 0.5]$, following the same protocol used in the 5D and 10D experiments. We employ identical experimental configurations for PINN--SPI and SAC as in the lower-dimensional cases. For PPO, we adopt a slightly modified setup: both the actor and critic learning rates are set to $3\times 10^{-5}$, and we use $512$ rollout steps per update, $4$ training epochs, and a mini-batch size of $256$.

Figure~\ref{fig:20d_lqr} illustrates that our method yields competitive performance and consistently attains higher cumulative rewards throughout training.

\begin{figure}[ht]   
    \includegraphics[width=.4\textwidth]{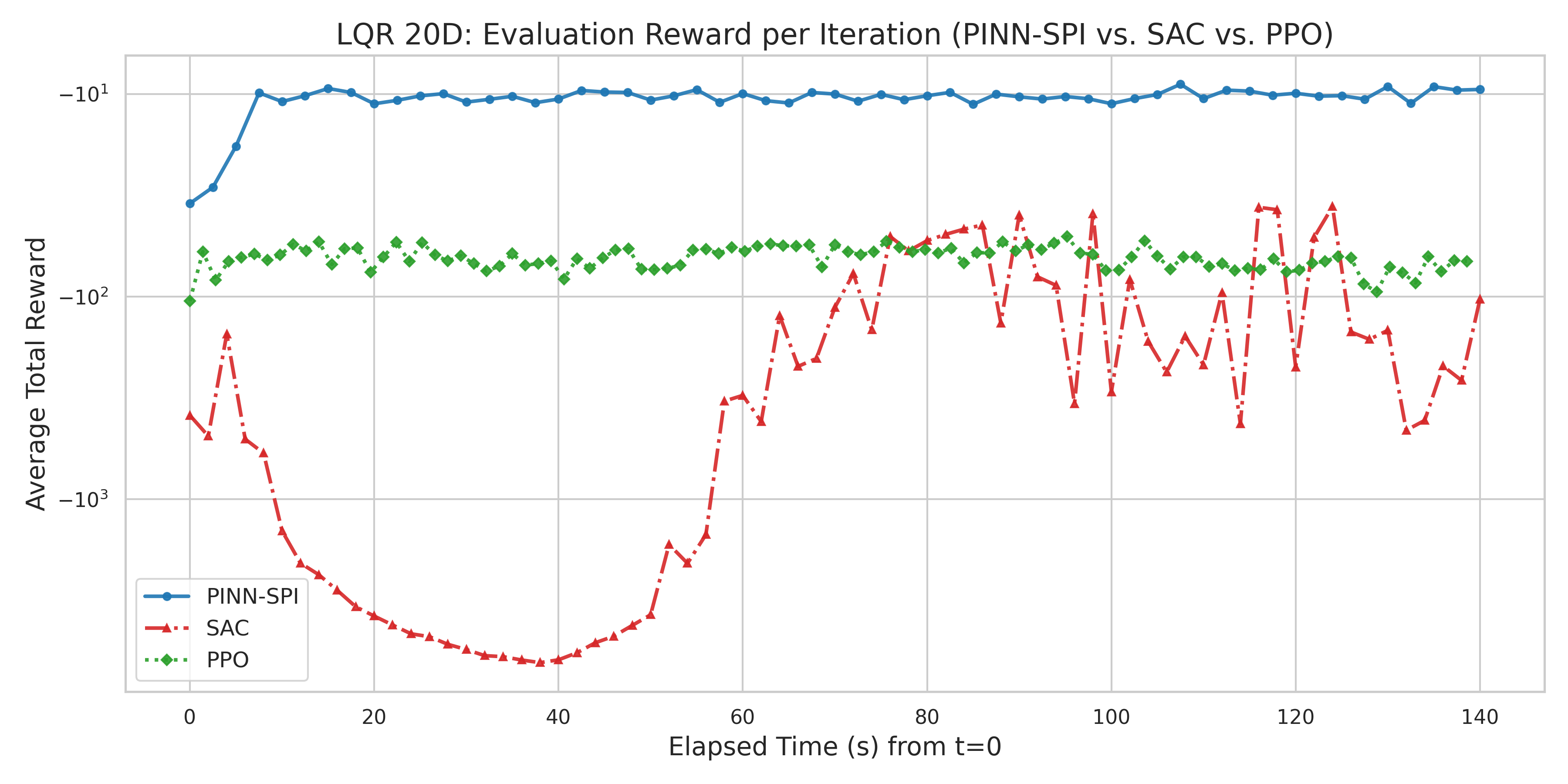}
    \caption{Comparison between our method and others, SAC and PPO.}
    \label{fig:20d_lqr}
\end{figure}

\subsection{Verification of monotonicity}
A key property of soft policy iteration~\cite{tran2025policy} is the monotonic improvement of the value function under exact updates. Our PINN--SPI method numerically preserves this monotonicity despite the presence of approximation errors from neural representation and finite collocation as shown in Figure~\ref{fig:cartpole:mono}

\begin{figure}[ht]
  \centering
  \begin{subfigure}{0.4\textwidth}
    \centering
    \includegraphics[width=\textwidth]{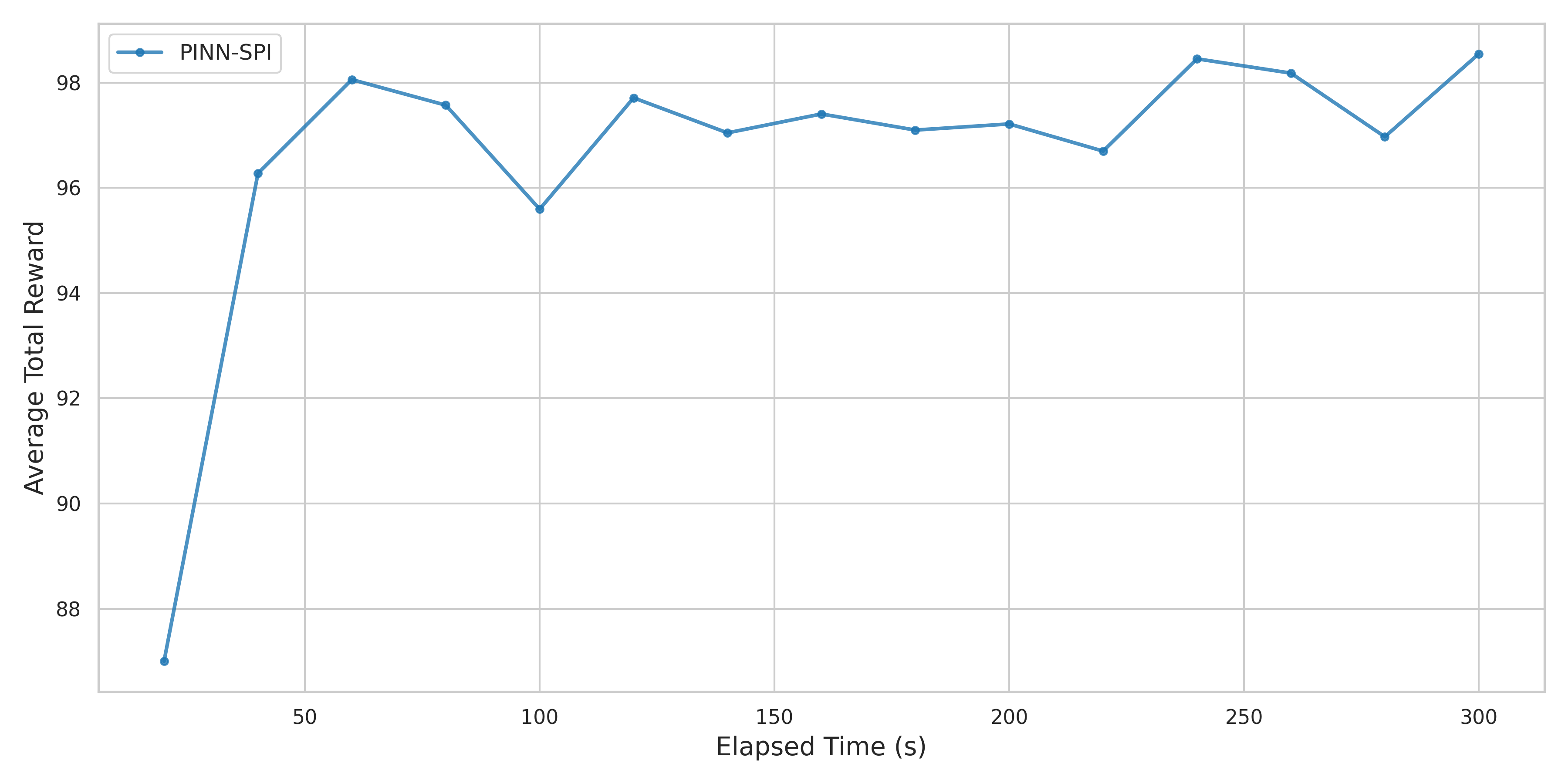}
    \caption{Cartpole: Stable and monotonic convergence.}
    \label{fig:img2-1}
  \end{subfigure}
  \hfill
  \begin{subfigure}{0.4\textwidth}
    \centering
    \includegraphics[width=\textwidth]{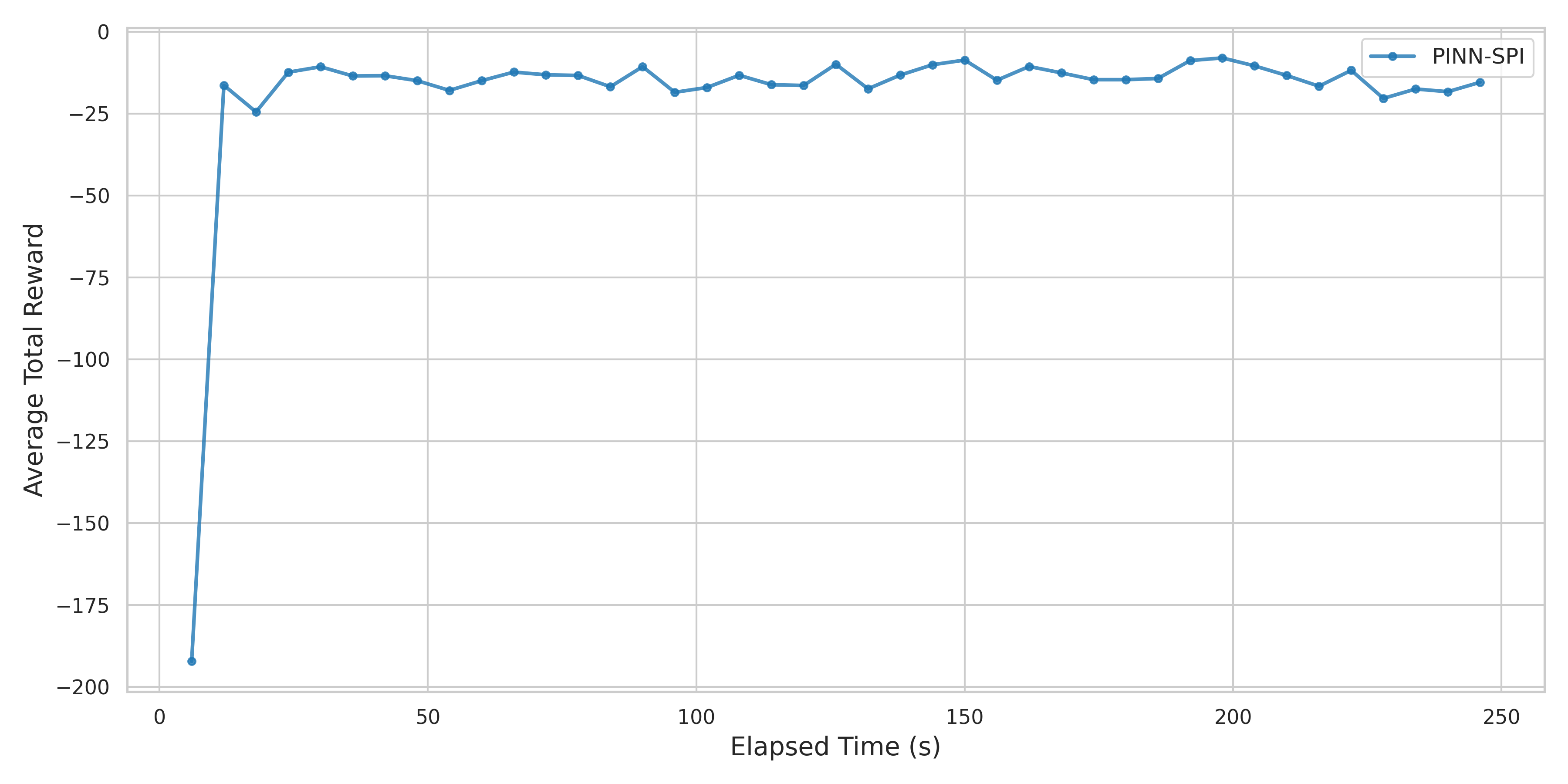}
    \caption{Pendulum: PINN-SPI reaches high total reward.}
    \label{fig:img2-2}
  \end{subfigure}
\caption{Evaluation reward over training time for PINN-SPI on cartpole and cendulum tasks. The average total reward tends to increase as policy iteration proceeds.}
  \label{fig:cartpole:mono}
\end{figure}

\end{document}